\newtheorem{thm}{Theorem}[section]
\newtheorem{lem}[thm]{Lemma}
\newtheorem{cor}[thm]{Corollary}
\newtheorem{prop}[thm]{Proposition}
\theoremstyle{definition}
\newtheorem{defi}[thm]{Definition}
\newtheorem{xca}[thm]{Exercise}
\theoremstyle{remark}
\newtheorem{rem}[thm]{Remark}
\numberwithin{equation}{section}
\definecolor{esperance}{rgb}{0.0,0.5,0.0}
\newcommand{\seon}[1]{\textsc{} {\color{magenta}\marginpar{} \textsf{#1}}}
\newcommand{\bbk}{\mathbf{k}}
\newcommand{\R}{\mathbb{R}}
\newcommand{\Z}{\mathbb{Z}}
\newcommand{\s}{\sigma}
\newcommand{\de}{\delta}
\newcommand{\Ybe}{F_{S^+(\cO)}}
\newcommand{\Ybr}{E^\cO_{[r,\infty)}}
\newcommand{\Ybm}{\E{r_m}}
\newcommand{\Ybmprime}{E^{\cO'}_{[r_m,\infty)}}
\DeclareMathOperator{\diam}{diam}
\DeclareMathOperator{\card}{card}
\DeclareMathOperator{\Grass}{Grass}
\DeclareMathOperator{\Span}{Span}
\newcommand{\Bad}{\mb{Bad}}
\newcommand{\Badg}{\mb{Bad}_{\ell,d}}
\newcommand{\al}{\alpha}
\newcommand{\del}{\delta}
\newcommand{\lam}{\lambda}
\newcommand{\eps}{\epsilon}
\newcommand{\sig}{\sigma}
\newcommand{\om}{\omega}
\newcommand{\Om}{\Omega}
\newcommand{\cA}{\mathcal{A}}
\newcommand{\cB}{\mathcal{B}}
\newcommand{\cC}{\mathcal{C}}
\newcommand{\cE}{\mathcal{E}}
\newcommand{\cH}{\mathcal{H}}
\newcommand{\cO}{\mathcal{O}}
\newcommand{\cP}{\mathcal{P}}
\newcommand{\bR}{\mathbb{R}}
\newcommand{\bZ}{\mathbb{Z}}
\newcommand{\bN}{\mathbb{N}}
\newcommand{\SL}{\operatorname{SL}}
\newcommand{\ASL}{\operatorname{ASL}}
\newcommand\norm[1]{\|#1\|}
\newcommand\set[1]{\left\{#1\right\}}
\newcommand\pa[1]{\left(#1\right)}
\newcommand\idist[1]{\langle#1\rangle}
\newcommand\av[1]{|#1|}
\newcommand\on[1]{\operatorname{#1}}
\newcommand\diag[1]{\operatorname{diag}\left(#1\right)}
\newcommand\mb[1]{\mathbf{#1}}
\newcommand\E[1]{E^\cO_{[#1,\infty)}}
\newcommand\tb[1]{\textbf{#1}}
\newcommand\mat[1]{\pa{\begin{matrix}#1\end{matrix}}}
\newcommand\smallmat[1]{\pa{\begin{smallmatrix}#1\end{smallmatrix}}}
\newcommand\crly[1]{\mathscr{#1}}
\newcommand{\wstar}{\overset{\on{w}^*}{\lra}}
\newcommand{\Supp}{\on{Supp}}
\newcommand{\defn}{\overset{\on{def}}{=}}
\newcommand{\lra}{\longrightarrow}
\newcommand{\onto}{\xymatrix{\ar@{>>}[r]&}}
\newcommand{\dima}{\underline{\dim}_a}
\newcommand{\dimm}{\underline{\dim}_M}
\newcommand{\eqlabel}[2]
{
\begin{equation}
{#2}\label{#1}
\end{equation}
}
\newcommand{\seonnote}[1]{\marginpar{\color{magenta}\tiny [SS] #1}}
\begin{document}

\title{Dimension bound for badly approximable grids}

\author{Seonhee Lim}
\author{Nicolas de Saxc\'e}
\author{Uri Shapira}

\thanks{}


\subjclass[2000]{Primary   28A33; Secondary 37C85, 22E40.}

\date{}

\keywords{}

\begin{abstract}
We show that for almost any vector $v$ in $\bR^n$, for any $\eps>0$ there exists $\del>0$ such 
that the dimension of the set of vectors $w$ satisfying $\lim\inf_{k\to\infty} k^{1/n}\idist{kv-w}\ge \eps$
(where $\idist{\cdot}$ denotes the distance from the nearest integer), is bounded above by $n-\del$. 
This result is obtained as a corollary of a discussion in homogeneous dynamics and the main tool 
in the proof is a relative version of the principle of uniqueness of measures with maximal entropy.
\end{abstract}

\maketitle
\section{The main result and its applications}

\subsection{Geometry of numbers}
A general theme in the geometry of numbers is to fix a domain $S\subset \bR^d$ and study the intersection of it with sets possessing
an
algebraic structure such as lattices or their cosets.
One is usually interested in bounding the cardinality of such an intersection and 
this will be the case in our discussion as well. We begin by describing the domains we will consider and which we refer to as \textit{spikes}. Throughout, we fix a dimension $d\ge 2$ and a diagonal flow 
$$a_t=\diag{e^{c_1t},\dots,e^{c_dt}};$$
where $c_i$ are fixed \emph{non-zero} numbers such that $\sum c_j = 0.$ 
Given a bounded open set $\cO\subset \bR^d$, we define the \textit{positive\footnote{One could work with two-sided spikes taking the union over $t\in \bR$ in~\eqref{eq1608} but our results are stronger as the 
domain decreases so we will concentrate on the one-sided case.}
 spike of $\cO$ with respect to $a_t$} to be the set 

\begin{equation}\label{eq1608}
S^+(a_t, \cO) = S^+(\cO) \defn \bigcup_{t>0} a_t^{-1}\cO.
\end{equation} 

 \begin{figure}
\begin{center}
\includegraphics[width=4cm]{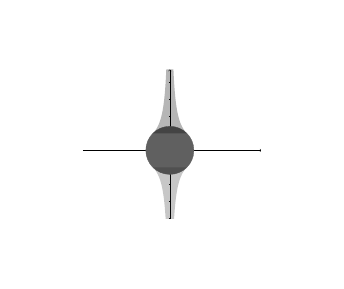}
\includegraphics[width=3.3cm]{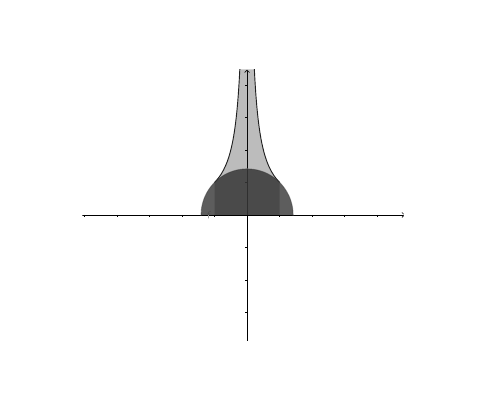}
\includegraphics[width=5cm]{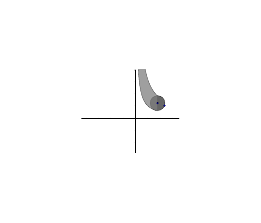}
\caption{Spikes of a ball around 0, half-ball around 0, a ball around (3,2) for $a_t=\diag{e^t, e^{-t}}$}
\end{center}
\end{figure}
As $a_t$ will be fixed throughout our discussion we omit it from the notation. 
The space of unimodular lattices (i.e.\ of covolume 1) in $\bR^d$ will be denoted by $X$. 
By a unimodular \emph{grid} $y$ in $\R^d$, we mean a coset $x+\mb{w}$ of a lattice $x\in X$ where $\mb{w}\in\bR^d$. 
We denote by $Y$ the space of unimodular grids in $\bR^d$ and by $\pi:Y\to X$ the natural projection.
Note that for $x\in X$, the fiber $\pi^{-1}(x)$ is simply the torus $\bR^d/x$.
For $x\in X$ and an open set $\cO\subset \bR^d$, we set
\begin{align}\label{eq:1136}
&\nonumber \Ybe\defn\set{y\in Y:y\cap S^+(\cO)\ \mbox{is finite}};\\
&\Ybe(x)\defn \Ybe\cap \pi^{-1}(x).
\end{align}  

Our main result, Theorem~\ref{thm:main} below, says that under a mild dynamical assumption on the forward $a_t$-orbit of $x$, the set $\Ybe(x)$ cannot have maximal Hausdorff dimension in $\pi^{-1}(x)$.
Let us introduce this dynamical assumption.
The standard action of $G_0\defn\SL_d(\bR)$ on $\R^d$ induces a transitive action of $G_0$ on  $X$, and since for $x_0=\bZ^d$ we have $\on{Stab} x_0=G_0(\Z)$, we may identify $X\simeq G_0/G_0(\bZ)$.
This endows $X$ with a smooth manifold structure and with a unique $G_0$-invariant Borel probability measure, which we denote by $m_X$. 

For a locally compact second countable Hausdorff space $Z$ we will denote by $\crly{P}(Z)$ the space of Borel probability measures on $Z$ and endow it with the weak* topology
by identifying $\crly{P}(Z)$ with a subset of the unit sphere in the dual of $C_0(Z)$. 
For $\mu\in \crly{P}(Z)$ and $f\in C_0(X)$ we alternate between the notations $\mu(f)$ and $\int fd\mu$.
We will denote by $\del_z\in \crly{P}(Z)$ the Dirac probability measure at $z$. If a map $g:Z\to Z$ is fixed we 
denote for $z\in Z$ and $T\in \bZ_+$, 
$\del_z^T\defn \frac{1}{T}\sum_{i=0}^{T-1}\del_{g^iz}\in \crly{P}(Z)$. Note that by the Banach-Alaoglu theorem
$\set{\al \mu: \al\in[0,1], \mu\in \crly{P}(Z)}$ is compact in the weak* topology and thus for any $z\in Z$, the sequence 
$\del_z^T\in \crly{P}(Z)$ has accumulation points of the form $\al\mu$ with $\al\in[0,1]$ and $\mu\in \crly{P}(Z)$. The following is concerned with the situation
where $\del_z^T$ can accumulate on a \emph{probability} measure.
Throughout we use the notation $\del_x^T$ for the transformation $a=a_1:X\to X$.
\begin{defi}[Heavy lattice]\label{def:heavy lattice}
\quad 
\begin{enumerate}
\item A lattice $x\in X$ is called \textit{heavy} (for $a_t$ in positive time) 
if $$\set{\del_x^T: T\in\bZ_+}^\om \cap\crly{P}(X)\ne\varnothing,$$  
where $F^\om$ is the set of accumulation points of $F$.
\item We fix once and for all a sequence of compactly supported functions $\psi_i\in C_c(X)$ such that
$0\le \psi_i\le 1$, and $\psi_i^{-1}(1)$ is an increasing sequence of compact sets that covers $X$. 
Given a sequence of non-negative numbers
$\eta_i\to 0$, we  define 
$$\crly{P}(X, (\eta_i))\defn\set{\mu\in \crly{P}(X): \forall i,\; \mu(\psi_i)\ge 1-\eta_i}.$$
\item Given a sequence of non-negative numbers $\eta_i\to 0$,  we define 
$$\cH(\eta_i)\defn\set{x\in X: \set{\del_x^T:T\in\bZ_+}^\om \cap  \crly{P}(X, (\eta_i))\ne \varnothing}$$  
\end{enumerate}
\end{defi}
As the following lemma shows, any heavy lattice belongs to some $\cH(\eta_i)$. 
The point in defining $\crly{P}(X,(\eta_i))$ and $\cH(\eta_i)$ is that our results about heavy lattices 
will be uniform on $\cH(\eta_i)$.
\begin{lem}\label{rem:1511}\quad\\
\begin{enumerate}
\item\label{r01} $\crly{P}(X)=\bigcup \crly{P}(X,(\eta_i))$ where the union is taken over all
sequences of non-negative numbers  $\eta_i\to 0$. 
\item\label{r012} The set of heavy lattices equals $\bigcup \cH(\eta_i)$ where the union is taken over all
sequences of non-negative numbers  $\eta_i\to 0$. 
\item\label{r02} $\crly{P}(X, (\eta_i))$ is compact .
\end{enumerate}
\end{lem}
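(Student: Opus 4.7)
The three claims are essentially formal, and the plan is to dispose of (1) directly from monotone convergence, (3) from Banach--Alaoglu together with weak-$*$ continuity of $\mu\mapsto\mu(\psi_i)$, and (2) as a corollary of (1). I would carry them out in the order (1), (3), (2).

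For (1), the inclusion $\bigcup_{(\eta_i)}\crly{P}(X,(\eta_i))\subset \crly{P}(X)$ is immediate from the definition. For the reverse, given $\mu\in\crly{P}(X)$, set $\eta_i\defn 1-\mu(\psi_i)$. The bound $\psi_i\ge \mathbf{1}_{\psi_i^{-1}(1)}$ together with the hypothesis that $(\psi_i^{-1}(1))_i$ is an increasing compact exhaustion of $X$ yields, via monotone convergence, $\mu(\psi_i)\to \mu(X)=1$. Hence $\eta_i\to 0$ and $\mu\in\crly{P}(X,(\eta_i))$ by construction.

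For (3), each functional $\mu\mapsto\mu(\psi_i)$ is weak-$*$ continuous on the dual of $C_0(X)$ since $\psi_i\in C_c(X)\subset C_0(X)$. Therefore $\crly{P}(X,(\eta_i))$ is a weak-$*$ closed subset of the set of sub-probability measures $\{\alpha\mu:\alpha\in[0,1],\mu\in\crly{P}(X)\}$, which is compact by Banach--Alaoglu. The only point demanding a moment of care is verifying that no mass is lost at infinity: any weak-$*$ accumulation point $\mu$ of a net in $\crly{P}(X,(\eta_i))$ still satisfies $\mu(\psi_i)\ge 1-\eta_i$ for every $i$, and letting $i\to\infty$ forces $\mu(X)=1$, so $\mu$ is genuinely a probability measure.

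Finally, (2) is immediate from (1). If $x$ is heavy, pick a probability accumulation point $\mu$ of $(\delta_x^T)$; by (1), $\mu\in\crly{P}(X,(\eta_i))$ for some sequence $(\eta_i)$, so $x\in\cH(\eta_i)$. Conversely, $\cH(\eta_i)$ is contained in the set of heavy lattices since $\crly{P}(X,(\eta_i))\subset \crly{P}(X)$. The one place where the argument has any substance is the no-escape-of-mass step in (3); this is built directly into the definition of $\crly{P}(X,(\eta_i))$, so I do not anticipate any real obstacle.
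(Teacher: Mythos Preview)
Your argument is correct and matches the paper's proof essentially line for line: the paper also sets $\eta_i=1-\mu(\psi_i)$ for (1), deduces (2) from (1) by picking a probability accumulation point, and for (3) observes that any weak-$*$ limit $\mu$ inherits $\mu(\psi_i)\ge 1-\eta_i$ and hence $\mu(X)=1$. The only cosmetic difference is the order in which the items are treated.
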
 
\begin{proof}
\eqref{r01}. 
For $\mu\in \crly{P}(X)$, $\mu\in \crly{P}(X,(\eta_i))$
for $\eta_i \defn 1- \mu(\psi_i)$. Note that $\eta_i\to 0$ because $\psi_i^{-1}(1)$ is an increasing cover of $X$.\\
\eqref{r012}.
Let $x$ be a heavy lattice and let $\mu\in \set{\del_x^T:T\in \bZ^+}^\om\cap \crly{P}(X)$. By part \eqref{r01} $\mu\in \crly{P}(X,(\eta_i))$ for some sequence $(\eta_i)$. By definition we then have that $x\in \cH(\eta_i)$.\\
\eqref{r02}. Let $\mu_m\in\crly{P}(X,(\eta_i))$ be a sequence and let $\mu$ be a weak* accumulation point of it. 
For each $i$ we have $\mu(X)\ge \mu(\psi_i)=\lim_m\mu_m(\psi_i)\ge 1-\eta_i$. Letting $i\to\infty$ we obtain
$\mu(X)=1$ and $\mu(\psi_i)\ge 1-\eta_i$ so that $\mu\in \crly{P}(X,(\eta_i))$ by definition.
\end{proof}

Our main result is as follows. Here, $\dim_H$ denotes Hausdorff dimension with respect to the Euclidean metric on $\pi^{-1}(x) \simeq \R^d/x$.

\begin{thm}[Heavy lattices have few bad grids]\label{thm:main}
For any bounded open set $\cO\subset \bR^d$, if $x$ is heavy (for $a_t$ in positive time), then $$\dim_H \Ybe(x)<d.$$
In fact, for a given $\eta_i\to 0$, there exists $\del = \del(\cO, (\eta_i))>0$ such that for any $x\in\cH(\eta_i)$, 
$$\dim_H \Ybe(x)<d-\del.$$
\end{thm}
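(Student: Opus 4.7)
The plan begins with reformulating the spike condition dynamically. Setting $\cO' \defn \bigcup_{t\in[0,1]}a_t^{-1}\cO$ (still bounded and open), one has $S^+(\cO) = \bigcup_{n \geq 0} a^{-n}\cO'$ where $a = a_1$. Up to handling the possibility that $a_t$-orbits approach $0$ (which can be avoided by a harmless replacement of $\cO$), the finiteness of $y \cap S^+(\cO)$ is equivalent to the existence of $n_0$ such that $a^n y \cap \cO' = \emptyset$ for all $n \geq n_0$. With $V \defn \{y \in Y : y \cap \cO' = \emptyset\}$ (closed) and $E \defn \bigcap_{n \geq 0} a^{-n}V$ (closed and forward $a$-invariant), this yields $\Ybe = \bigcup_{n_0 \geq 0} a^{-n_0}E$. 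Since $a$ restricts to a linear, hence bi-Lipschitz, isomorphism between fiber tori, and since the forward $a$-orbit of any $x \in \cH(\eta_i)$ is again contained in $\cH(\eta_i)$ (accumulation points of $\del_{x}^T$ being shift-invariant), it suffices to prove a uniform upper bound $\dim_H (E \cap \pi^{-1}(x')) \le d - \del$ for all $x' \in \cH(\eta_i)$.

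Next, I would argue by contradiction. Assume $\dim_H(E \cap \pi^{-1}(x')) > d - \del$ for some $x' \in \cH(\eta_i)$. Frostman's lemma produces a probability measure $\mu$ on $E \cap \pi^{-1}(x')$ with $\mu(B(y,r)) \lesssim r^{d - \del}$. Form the time averages $\nu_N \defn \frac{1}{N}\sum_{n=0}^{N-1}(a^n)_*\mu$ on $Y$; by the forward $a$-invariance of $E$, each $\nu_N$ is supported in $V$, and $\pi_* \nu_N = \del_{x'}^N$. Because $x' \in \cH(\eta_i)$, there is a subsequence along which $\del_{x'}^{N_k}$ converges weak$^*$ to some $m \in \crly{P}(X, (\eta_i))$; since no mass escapes on the base, a further subsequence of $\nu_{N_k}$ converges weak$^*$ to a probability measure $\nu$ on $Y$ that is $a$-invariant, supported on $E \subset V$, and satisfies $\pi_* \nu = m$.

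The contradiction emerges from two opposing estimates on the relative entropy $h_\nu(a \mid \pi) \defn h_\nu(a) - h_m(a)$. On one hand, the Frostman exponent $d - \del$ of $\mu$ should propagate through the time-averaging and weak$^*$ limit (via a Ledrappier--Young style analysis on the fiber torus, with careful treatment of the anisotropic distortion of balls by $a^n$ into ellipsoids of equal volume) to a lower bound $h_\nu(a \mid \pi) \ge \sum_{c_i > 0} c_i - C_1 \del$ for some constant $C_1$ depending only on the flow. On the other hand, the main tool announced in the abstract---a relative version of the principle of uniqueness of measures of maximal entropy for the torus extension $\pi: Y \to X$---states that over $a$-invariant $\nu$ with $\pi_*\nu = m$, the relative entropy $h_\nu(a \mid \pi)$ is maximised precisely by fiberwise Lebesgue measure, with value $\sum_{c_i>0} c_i$; applied quantitatively to $\nu$ whose support lies in the proper closed set $V$ (whose fibers have Lebesgue measure uniformly less than $1$), it delivers a uniform gap $h_\nu(a \mid \pi) \le \sum_{c_i > 0} c_i - \del_0$ with $\del_0 = \del_0(\cO, (\eta_i)) > 0$. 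Choosing $\del < \del_0 / C_1$ then gives the contradiction and the theorem.

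The main obstacle is establishing this quantitative relative uniqueness of MME with the gap $\del_0$ uniform over the base measures $m \in \crly{P}(X, (\eta_i))$; the compactness statement from Lemma~\ref{rem:1511}\eqref{r02} is pivotal for producing such uniformity. A secondary difficulty is the Frostman-to-entropy transfer for $\nu$: since the diagonal map $a$ distorts balls into ellipsoids of varying aspect ratios, the Frostman exponent of $\mu$ does not directly persist in the pushforwards $(a^n)_* \mu$, and a partial-dimension (Ledrappier--Young type) decomposition of the conditional measures of $\nu$ along the fiber is required to recover the desired entropy lower bound.
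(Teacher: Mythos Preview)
Your overall architecture---build an $a$-invariant measure supported on the ``avoiding'' set with large relative entropy, then invoke rigidity---is the paper's strategy. But the way you close the contradiction is structurally different from the paper, and the difference matters.

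The paper never proves a quantitative entropy gap of the form $h_\nu(a|X)\le h_a-\del_0$ for measures supported in $V$. Instead, it negates the conclusion at the level of a \emph{sequence}: assuming the theorem fails yields lattices $x_m\in\cH(\eta_i)$ with $\dimm E^{\cO}_{[r_m,\infty)}(x_m)\ge d-\tfrac1m$, from which Proposition~\ref{largeentropy} produces $a$-invariant measures $\mu_m$ with $h_{\mu_m}(a|X)\ge h_a-\tfrac1m$ and $\pi_*\mu_m\in\crly{P}(X,(\eta_i))$. Compactness of $\crly{P}(X,(\eta_i))$ gives a weak* limit $\mu$, and the delicate point---passing the entropy lower bound through the limit---is handled by the second half of Proposition~\ref{largeentropy}: a single finite partition $\cP$ with $\mu$- and $\mu_m$-null boundaries gives $\tfrac1q H_{\mu_m}(\cP^{(q)}|X)\ge h_a-\tfrac1m-D\eta_{i_0}$ for all $m$, hence $\tfrac1q H_{\mu}(\cP^{(q)}|X)\ge h_a-D\eta_{i_0}$, and letting $q,i_0\to\infty$ gives $h_\mu(a|X)\ge h_a$. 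Now the \emph{qualitative} Proposition~\ref{maxinv} forces $\mu$ to be $U$-invariant, contradicting $\Supp\mu\subset E^{\cO'}_\bR$.

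Your route, by contrast, fixes a single $x'$ and a single $\del$, so the limit $\nu$ you obtain has relative entropy only $\ge h_a-C_1\del$, not $h_a$. You therefore need the quantitative gap $\del_0$, which you correctly flag as the main obstacle. But proving that gap (uniformly over $m\in\crly{P}(X,(\eta_i))$) would itself proceed by taking a sequence $\nu_n$ with entropies tending to $h_a$, passing to a weak* limit, and arguing the limit has maximal entropy---i.e.\ exactly the paper's argument, now with the added burden that your $\nu_n$ are arbitrary $a$-invariant measures rather than the specific averages of Proposition~\ref{largeentropy}, so the single-fiber identity $H_{\nu_k}(\cP^{(n_k)}|X)=H_{\nu_k}(\cP^{(n_k)})$ that drives the paper's entropy computation is unavailable. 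In short, the quantitative gap is not an independent tool but a repackaging of the same difficulty; the paper's sequence-of-lattices formulation sidesteps it entirely by landing exactly at maximal entropy.

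A secondary difference: the paper does not use Frostman measures or a Ledrappier--Young transfer. It works with lower box dimension, takes maximal $e^{-n_k}$-separated sets $S_k$ for the quasi-metric $d_a$, and controls how many points of $S_k$ can lie in an atom of $\cP^{(n_k)}$ via the covering Lemma~\ref{covering}. This is more elementary than your proposed route and directly yields the inequality $\tfrac1q H_{\mu_k}(\cP^{(q)}|X)\ge \dima S - D\eta$ needed to survive the weak* limit.
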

By Lemma~\ref{rem:1511}\eqref{r012}, results stated for lattices in $\cH(\eta_i)$ for arbitrary 
$(\eta_i)$ automatically hold for heavy lattices. Thus, the second part of Theorem~\ref{thm:main} implies
the first and demonstrates the uniformity gained by exhausting the set of heavy lattices by the sets $\cH(\eta_i)$.
The following corollary shows that this uniformity survives if one is only interested in an almost sure statement with respect 
to the smooth measure $m_X$. 

\begin{cor}[Random lattices have few bad grids]\label{cor:main}
For any bounded open set $\cO\subset \bR^d$, there exists $\del>0$ such that for $m_X$-almost any lattice $x$,
$\dim_H \Ybe(x)\le d-\del.$
\end{cor}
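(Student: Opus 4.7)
The plan is to reduce to the uniform statement in the second part of Theorem~\ref{thm:main}. The key observation is that a single sequence $(\eta_i)$ suffices: namely, apply Lemma~\ref{rem:1511}\eqref{r01} to the Haar probability measure $m_X$ itself. That is, set
\eq{\eta_i \defn 1-m_X(\psi_i),}
which tends to $0$ because the sets $\psi_i^{-1}(1)$ form an increasing cover of $X$. By construction $m_X\in \crly{P}(X,(\eta_i))$. Theorem~\ref{thm:main} then produces a number $\del=\del(\cO,(\eta_i))>0$ with the property that $\dim_H\Ybe(x)<d-\del$ for every $x\in \cH(\eta_i)$, and the whole task reduces to showing that $m_X$-almost every $x\in X$ lies in $\cH(\eta_i)$.

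For this last point the plan is to invoke ergodicity of $a=a_1$ on $(X,m_X)$. The $\mathbb R$-action $\{a_t\}$ is a one-parameter diagonal (hence mixing, by Howe--Moore) flow on $X=G_0/G_0(\bZ)$, and ergodicity of the time-one map $a$ follows in the standard way. The Birkhoff ergodic theorem then gives that for every $f\in C_c(X)$, $\frac{1}{T}\sum_{i=0}^{T-1} f(a^i x)\to \int f\,dm_X$ for $m_X$-a.e.\ $x$. Choosing a countable dense family in $C_c(X)$ (and using a diagonal argument together with the separability of the weak* topology on $\crly{P}(X)$), one obtains that for $m_X$-a.e.\ $x$ the whole sequence $\del_x^T$ converges to $m_X$ in the weak* topology.

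Combining the two steps: for such a typical $x$, the unique weak* accumulation point of $\{\del_x^T\}$ is $m_X\in \crly{P}(X,(\eta_i))$, so $x\in \cH(\eta_i)$ by definition, and Theorem~\ref{thm:main} yields $\dim_H\Ybe(x)\le d-\del$. The value $\del$ is the uniform constant fixed in the first step and does not depend on $x$, which is precisely what the corollary asserts.

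The argument has no real obstacle beyond matching the two pieces: the uniform dimension bound on $\cH(\eta_i)$ (already done in Theorem~\ref{thm:main}) and the almost-sure equidistribution of $\del_x^T$ under $m_X$ (Birkhoff plus ergodicity of a single element of a mixing flow). The only mildly delicate point is to make sure that one and the \emph{same} sequence $(\eta_i)$ works for $m_X$-a.e.\ $x$; this is ensured by picking $(\eta_i)$ from $m_X$ itself, rather than from each individual trajectory.
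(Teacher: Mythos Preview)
Your proposal is correct and follows essentially the same route as the paper's proof: fix $(\eta_i)$ so that $m_X\in\crly{P}(X,(\eta_i))$ via Lemma~\ref{rem:1511}\eqref{r01}, use ergodicity of $a_1$ and Birkhoff to get $\del_x^T\wstar m_X$ for $m_X$-a.e.\ $x$, conclude $x\in\cH(\eta_i)$, and apply the uniform part of Theorem~\ref{thm:main}. The paper states these steps more tersely but the argument is the same.
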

\begin{proof}
By the ergodicity of the action of $a_1$ on $X$, for almost any $x$, $\del_x^T\wstar m_X$ which 
trivially implies  $m_X\in \set{\del_x^T:T>0}^\om$. By Lemma~\ref{rem:1511}\eqref{r01}, $m_X\in \crly{P}(X,(\eta_i))$ for a
suitable sequence $(\eta_i)$. Thus by definition $x\in \cH(\eta_i)$. 
The result then follows from Theorem~\ref{thm:main}. 
\end{proof}

\subsection{An application to Diophantine approximation}
\label{subsec:app}

For a vector $v\in\bR^n,$ we are interested in the behaviour of the sequence 
$$\set{kv \on{mod} \bZ^n\ ;\ k\in\bN}\subset \bR^n/\bZ^n.$$ 
If $v$ does not belong to a rational subspace, this sequence is dense and even equidistributed so that for any \textit{target} $w\in \bR^n$, we have that $\inf_{k\ge 1}\idist{kv-w}=0$, where $\idist{t}$ denotes the distance from $t$ to $\bZ^n$.  
A more subtle question is whether 
$\liminf_{k\to\infty}\psi(k)\idist{kv-w}=0$ for some prescribed function $\psi\nearrow\infty$ on $\bN$. One may visualize this as a \textit{shrinking target} problem where one asks if for any $\eps>0$ and for arbitrarily large $k$, 
the point $kv \on{mod} \bZ^n$ on the $n$-torus is inside the ball of radius $\psi(k)^{-1}\eps$ centered at $w$ (which is the shrinking target).  
The most classical choice, and the one that we will consider, is $\psi(k) = k^{1/n}$. 
We call $w$ $\eps$-\textit{bad} for $v$ if
\eqlabel{eq1523}{
\liminf_{k\to \infty} k^{1/n}\idist{kv-w}\ge \eps,
}
and denote 
\begin{align*}
\mb{Bad}^\eps(v)&\defn\set{w\in\bR^n:w\textrm{ is $\eps$-bad for $v$}},\\ 
\mb{Bad}(v)&\defn\bigcup_{\eps>0}\mb{Bad}^\eps(v).
\end{align*}
Our main application is the following.
\begin{thm}\label{thm:app}
For any $\eps>0$ there exists $\del>0$ such that for Lebesgue almost every $v\in \R^n$,
$\dim_H\mb{Bad}^\eps(v) < n-\del$.
\end{thm}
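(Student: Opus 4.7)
The plan is to deduce Theorem~\ref{thm:app} from Theorem~\ref{thm:main} via the classical Dani correspondence between Diophantine approximation and dynamics on the space of unimodular lattices. Set $d = n+1$ and take $a_t = \diag{e^{t/n}, \dots, e^{t/n}, e^{-t}}$; to $v \in \R^n$ I attach the lattice $x_v \in X$ generated by the columns of $\smallmat{I_n & v \\ 0 & 1}$, and to a pair $(w, s) \in \R^n \times [0, 1]$ I attach the grid $y_{v, w, s} \in \pi^{-1}(x_v)$ whose offset in $\R^{n+1}/x_v$ is $(-w, -s)$. A short computation with a box $\cO = B(0, \eps') \times (\alpha, \beta) \subset \R^n \times \R_{>0}$, whose parameters are chosen in terms of $\eps$, shows that for every $s \in [0, 1]$ the intersection $y_{v, w, s} \cap S^+(\cO)$ is finite if and only if $w \in \mb{Bad}^\eps(v)$. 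Since $\mb{Bad}^\eps(v)$ is $\Z^n$-periodic, this yields a bi-Lipschitz embedding $\big(\mb{Bad}^\eps(v) \cap [0, 1]^n\big) \times [0, 1] \hookrightarrow \Ybe(x_v)$.

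Next I would check that for Lebesgue-a.e.\ $v \in \R^n$ the lattice $x_v$ lies in a common set $\cH(\eta_i)$. The family $\{x_v\}_{v \in \R^n}$ parametrises the unstable horocycle through $\Z^{n+1}$, and the classical equidistribution of expanding horospherical orbits (following from mixing of $a_t$ on $(X, m_X)$ via Margulis' thickening argument) yields $\del_{x_v}^T \wstar m_X$ for Lebesgue-a.e.\ $v$. By Lemma~\ref{rem:1511}\eqref{r01} I can fix once and for all a single sequence $\eta_i \to 0$ with $m_X \in \crly{P}(X, (\eta_i))$, and then the definition of $\cH$ gives $x_v \in \cH(\eta_i)$ for Lebesgue-a.e.\ $v$.

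To conclude, I apply the uniform version of Theorem~\ref{thm:main} with the sequence $(\eta_i)$ and the box $\cO$ above: there exists a single $\del = \del(\eps) > 0$ such that $\dim_H \Ybe(x_v) < (n+1) - \del$ for Lebesgue-a.e.\ $v$. Combining this with the embedding of the first paragraph and the standard product bound $\dim_H(A \times [0, 1]) \ge \dim_H A + 1$ for Hausdorff dimension, one obtains
$$\dim_H \mb{Bad}^\eps(v) + 1 \;\le\; \dim_H \Ybe(x_v) \;<\; (n+1) - \del,$$
so $\dim_H \mb{Bad}^\eps(v) < n - \del$, as required. The main obstacle I expect is the horospherical equidistribution step: the horocycle $\{x_v : v \in \R^n\}$ is an $m_X$-null subset of $X$, so the Birkhoff argument used in the proof of Corollary~\ref{cor:main} does not transfer directly and one must genuinely invoke an equidistribution theorem for expanding translates of horospheres from Lebesgue-a.e.\ starting point. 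A minor secondary point is the codimension-one gap between $\mb{Bad}^\eps(v) \subset \R^n$ and the $(n+1)$-dimensional torus $\pi^{-1}(x_v)$, which is precisely absorbed by the extra $[0, 1]$ factor in the embedding.
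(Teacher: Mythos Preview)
Your approach is correct and essentially identical to the paper's (see Theorem~\ref{thm:appug} and the corollary immediately following it): the paper uses the same Dani correspondence with $x_v$, embeds $\mb{Bad}^\eps(v)\times[0,1]$ into $\Ybe(x_v)$ via an explicit box $\cO=\{(w,s):s\in(0,1),\ \|w\|<\eps/2\}$, cites the equidistribution $\del_{x_v}^T\wstar m_X$ for Lebesgue-a.e.\ $v$ as well-known, fixes $(\eta_i)$ with $m_X\in\crly{P}(X,(\eta_i))$, and applies the uniform part of Theorem~\ref{thm:main}. Two minor remarks: only the implication $w\in\mb{Bad}^\eps(v)\Rightarrow y_{v,w,s}\in\Ybe(x_v)$ is actually needed (your ``if and only if'' is slightly delicate at the threshold $\liminf=\eps$ and is never used), and the paper normalizes the flow as $a_t=\diag{e^t,\dots,e^t,e^{-nt}}$, which is just a reparametrization of yours.
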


To put this in context we mention that
it follows from~\cite{BHKV10} that for any $v\in\bR^n$, $\dim_H \mb{Bad}(v)=n$. Later, it was shown in \cite{ET} that $\mb{Bad}(v)$ is a winning set.
Note also that the conclusion of the theorem cannot hold for every $v\in\R^n$.
Indeed, if $v$ lies in a rational subspace (and hence is 
trivially singular), then for small enough $\eps$, the set $ \mb{Bad}^\eps(v)$ has non-empty interior and thus obviously has dimension $n$. 
Moreover, in Section~\ref{sec:ex} we construct non-singular vectors
which violate the conclusion of the theorem and satisfy $\dim_H \mb{Bad}^\eps(v)=n$ for a positive $\eps$. 
As we will see in Section~\ref{sec:mt}, Theorem~\ref{thm:app} holds not only for almost every $v$, but for any heavy vector $v$ (although the constant $\del$ in Theorem~\ref{thm:app}  might depend on $v$), see Definition~\ref{def:heavy vector} and Theorem~\ref{thm:appug}.
It follows from~\cite{GDV} that for non-singular vectors $\lam( \mb{Bad}(v)) = 0$, where $\lam$ is the Lebesgue measure on $\bR^n$. Thus the above theorem is an
upgrade of the result in \cite{GDV} just mentioned under the stronger assumption of heaviness. 
We refer the reader to Section~\ref{sec:mt} for other examples of applications of Theorem~\ref{thm:main}, such as diophantine approximation of affine subspaces of $\R^n$.


\subsection{Outline of the proof of Theorem~\ref{thm:main}}

We briefly describe our strategy which is similar in spirit to the idea given in~\cite[Remark 2.2]{BM} and which could be described in a 
nutshell as rigidity of measures with maximal entropy. 
Assuming by way of contradiction that $\dim \Ybe(x) =d$ for a heavy lattice $x$, we construct a sequence of 
probability measures $\mu_k$ defined by taking the uniform measures $\nu_k$ 
on large finite sets $S_k\subset \Ybe(x) \subset \pi^{-1}(x)$ and averaging them along the $a_1$-orbit; $\mu_k = n_k^{-1}\sum_{i=1}^{n_k}(a_1)^i_*\nu_k,$ 
for a suitable $n_k$ (defined in the proof of Proposition~\ref{largeentropy}).
The heaviness assumption allows to take a weak-* limit $\mu$ of $\mu_k$ that is a probability measure on $Y$ and moreover, the maximal dimension assumption translates into the maximality 
of the relative entropy of $\mu$ with respect to 
$a_1$ relative to the factor $X$.
We then prove that maximality of the relative entropy implies invariance of $\mu$ under the whole subgroup of translations in $\bR^d$.
This leads us to a contradiction because by construction, $\mu$ is supported on the accumulation points of forward $a_t$-orbits of the points in $\Ybe$ and in particular, must be supported in the closed $a_t$-invariant set 
$\set{y\in Y:\ 0\in y\ \mbox{or}\ \forall t\in\R,\, a_t\cdot y\cap \cO=\varnothing}$.

%

\subsection{Plan of the paper}

Apart from this introduction, this paper consists of five parts. In Section~\ref{sec:large-entropy}, we show that a set of large dimension in the space of grids, contained in a single fiber, can be used to construct an $a_t$-invariant measure on $Y$ with large entropy with respect to the factor $X$.
Then, in Section~\ref{sec:invariance}, we study $a_t$-invariant measures with maximal relative entropy with respect to $X$; using work of Einsiedler and Lindenstrauss, we show that they are invariant under the unstable horospherical  
subgroup $U^+$ associated to $a_t$. 
In Section~\ref{sec:mt}, using the two previous sections, we finish the proof of Theorem~\ref{thm:main}, before explaining the applications to Diophantine approximation in Section~\ref{sec:diophantine}.
We conclude the paper with Section~\ref{sec:ex}, detailing the construction of some lattices with non-divergent $a_t$-orbits 
that do not satisfy the conclusion of Theorem~\ref{thm:main}.

\section{Measures with large entropy}
\label{sec:large-entropy}

Given a heavy lattice $x$ for $a_t$ and a set $S\subset\pi^{-1}(x)$ of grids lying above $x$, we explain how to construct a measure $\mu$ on $Y$ with large entropy relative to the factor $X$, which is supported in the closure of the forward orbit of $S$ under the diagonal flow $a_t$.

\subsection{Action on the space of grids}

Just like for the space $X$ of lattices in $\R^d$, one can view the space $Y$ of unimodular grids as a homogeneous space.
Indeed, the natural action on $\R^d$ of the group $G=\ASL_d(\R)$ of area-preserving affine transformations induces a transitive action on $Y$, with $\on{Stab}y_0=G(\Z)$ if $y_0=\Z^d$, so that $Y\simeq G/G(\Z)$ has a natural smooth manifold structure, and carries a unique $G$-invariant probability measure $m_Y$.

We denote by $U$ the unipotent radical of $G$, which consists of all translations on $\R^d$. It is clear that $U$ acts simply transitively on $\R^d$.
One naturally identifies $G_0=\SL_d(\R)$ with $G/U$, and then, the canonical projection $\pi:Y\to X$ intertwines the actions of $G$ and $G_0$ on $Y$ and $X$, respectively, in the sense that for any $g\in G$ and $y\in Y$,
\[ \pi(g\cdot y) = \bar{g}\cdot\pi(y),\]
where $\bar{g}$ denotes the projection of $g$ to $G_0\simeq G/U$.
Clearly, if $y$ is a grid with underlying lattice $x$, then
\[ \pi^{-1}(x)=Uy.\]
It is sometimes convenient to view $G$ as a subgroup of $\SL_{d+1}(\R)$ by
\[ G=\{
\left(\begin{matrix}
A & u\\
0 & 1\\
\end{matrix}\right);\ 
A\in \SL_d(\R),\ u\in\R^d\},\]
in which case the unipotent radical is $U = \set{\smallmat{I_d & u\\ 0 &1}:u\in\bR^d}$.
\bigskip

Recall that $a_t=\diag{e^{c_1t},\dots,e^{c_dt}}$ is a one-parameter diagonal subgroup in $G_0=\SL_d(\R)$.
We take a lift of this one-parameter group to $G\subset \SL_{d+1}(\bR)$ given by $\smallmat{a_t&0\\0&1}$
and by abuse of notation we denote it again by $a_t$.
It will be convenient to normalize the flow $a_t$ so that 
\eqlabel{cnormalize}{
\max_{1\leq i\leq d} c_i=1.
}
We let $a\defn a_1$ be the time-one map for the diagonal flow and denote by $G^+$ the unstable horospherical subgroup for $a$ in $G$, i.e. 
\[ G^+ \defn \{ g\in G\ |\ a_tga_t^{-1} \to e\ \mbox{as}\ t\to-\infty\}.\]
We let $U^+\defn U\cap G^+$ so that
if $J^+=\{i\in\{1,\dots,d\}\,|\, c_i>0\}$, then we have
\[ U^+=\{
\left(\begin{matrix}
I_d & u\\
0 & 1\\
\end{matrix}\right);\ 
u=\, ^t(u_1,\dots,u_d)\in\R^d\ \mbox{with}\ u_i=0\ \forall i\not\in J^+\},\]

On $U$, we will use the Euclidean distance $d_E$ inherited from $\R^d$. This metric induces a metric, still denoted by $d_E$, on the fiber $\pi^{-1}(x)\simeq\R^d/x$ of all grids lying above $x \in X$.
Given a grid $y$, we define the \textit{fiber injectivity radius} at $y$ to be the maximal number $r_y>0$ such that the orbit 
map $u\mapsto uy$ is injective, and therefore isometric, on the open ball $B_{r_y}^{U,d_E}(0)$ of radius $r_y$ in $U$ for the Euclidean metric $d_E$.
Note that the fiber injectivity radius is constant along the fiber and is bounded away from zero on compact sets in $Y$. 

On $U^+$, we will also make use of another metric, or rather, quasi-metric, more adapted to the action of $a_t$. 
We define the quasi-norm associated to $a$ by  $|u|_a=\max_{j\in J^+}|u_j|^{1/c_j}$.
The function on $U^+\times U^+$, given by $d_a(u,v)=|u-v|_a$ is a quasi-metric: it is symmetric, positive definite, and satisfies, for some constant $C$ depending on the $c_i$, for all $u,v,w$ in $U^+$, $d_a(u,w)\leq C(d_a(u,v)+d_a(v,w))$.
The ball $B_\delta^{U^+,d_a}(u)$ of radius $\delta$ around $u$ for $d_a$ is simply the set of $v\in U^+$ such that $d_a(u,v)< \delta$.

\begin{rem}\label{remark1}We observe two things:
\begin{enumerate}
\item\label{1110} A ball $B^{U^+,d_a}_\delta$ in $U^+$ is simply a box with side-lengths $2\delta^{c_j}$, $j\in J^+$ with respect to $d_E$.
\item\label{1111} The action of $a_t$ on $U^+$ is a dilation by a factor of $e^t$ for the quasi-metric $d_a$; that is, for all $u,v$ in $U^+$ and $t\in\R$, we have 
\eqlabel{eq: scaling}{
d_a(a_tu,a_tv)=e^td_a(u,v),
}
\end{enumerate} 
\end{rem}
We let $W_y^+$ be the image of  $B_{r_y}^{U^+,d_E}$ under the orbit map. 
We call $W_y^+$  \emph{the injective unstable leaf at $y$ in the fiber}.
By definition of $r_y$, the orbit map identifies $(B_{r_y}^{U^+,d_E}, d_E)$ and $(W_y^+, d_E)$ isometrically.
Of course, the quasi-distance $d_a$ on $U^+$ also induces a quasi-distance on $W_y^+$, which we again denote by $d_a$. 
Since we will use both distances $d_E$ and $d_a$ on $W_y^+$ (which are far from being equivalent if $c_j<1$  for some $j\in J^+$), 
we will indicate the metric in the superscript when necessary.

\subsection{Dimensions}

Let $X$ be a space endowed with a quasi-distance $d$.
For a bounded subset $S\subset X$ we will consider its lower Minkowski dimension (or lower box dimension)
$\underline{\dim}_{d} S$ for the quasi-metric $d$, defined by
\[ \underline{\dim}_d S \defn \liminf_{\delta\to 0} \frac{\log N_d(S,\delta)}{\log\frac{1}{\delta}},\]
where $N_d(S,\delta)$ is the maximal cardinality of a $\delta$-separated subset of $S$ for the quasi-metric $d$.
If $S$ is unbounded, we let $\underline{\dim}_d S = \sup\{\underline{\dim}_d S\cap K\ ;\ K\ \mbox{compact}\}$.

In particular, for a set $S\subset W_y^+$, we will consider its lower Minkowski dimensions 
$$\dima S \defn \underline{\dim}_{d_a} S, \qquad \underline{\dim}_M S \defn \underline{\dim}_{d_E}S$$ for the quasi-metric $d_a$ and the Euclidean metric $d_E$, respectively.
We will also consider the Hausdorff dimension $\dim_H S$, always defined with respect to the Euclidean metric.
We refer the reader to \cite{falconer} for general properties of Minkowski or Hausdorff dimensions, such as the inequality
\[ \dimm S\geq \dim_HS.\]

We introduce $\dima$ in order to relate dimension $\underline{\dim}_M$ to entropy, and further to Hausdorff dimension. The following simple observation gives a relation between  $\underline{\dim}_M S$ and  $\underline{\dim}_a S$. Let 
$$h_a= \sum_{i\in J^+} c_j.$$

\begin{lem}\label{relating dimensions}
We have $\dima U^+=h_a$, and moreover, for any set $S\subset U^+$,  $$\dima S \ge \underline{\dim}_M S+h_a-\dim U^+.$$
\end{lem}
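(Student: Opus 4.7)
The plan is to exploit the explicit description from Remark~\ref{remark1}: a $d_a$-ball of radius $\delta$ in $U^+$ is an axis-aligned Euclidean box with side lengths $2\delta^{c_j}$, $j\in J^+$. Such a box has Euclidean volume comparable to $\delta^{h_a}$, and since the normalization \eqref{cnormalize} forces $c_j\le 1$ for every $j\in J^+$, we have $h_a\le \dim U^+=|J^+|$, so a $d_a$-ball is elongated relative to a Euclidean ball of the same radius in the directions where $c_j<1$. This geometric picture drives both assertions of the lemma.

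For the equality $\dima U^+=h_a$, I would first exhibit a $\delta$-separated set of the expected size: the grid $\prod_{j\in J^+}\delta^{c_j}\Z\cap[0,1]^{|J^+|}$ is $d_a$-separated at scale $\delta$, because any two distinct points differ in some coordinate~$j$ by at least $\delta^{c_j}$, so their $d_a$-distance is $\ge\delta$; and it has $\asymp\delta^{-h_a}$ elements. For the matching upper bound, tile a compact $K\subset U^+$ by axis-aligned boxes of side lengths $\delta^{c_j}$: each tile has $d_a$-diameter $O(\delta)$ and sits in one $d_a$-ball of radius $O(\delta)$, while the number of tiles meeting $K$ is $O(\delta^{-h_a})$, so $N_{d_a}(K,\delta)\lesssim\delta^{-h_a}$.

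For the inequality, the key is a direct comparison of covering numbers. Every $d_a$-ball of radius $\delta$, being a Euclidean box of side lengths $2\delta^{c_j}\ge 2\delta$, can be covered by at most
\[
\prod_{j\in J^+}\bigl\lceil 2\delta^{c_j-1}\bigr\rceil\;\le\; C\,\delta^{h_a-\dim U^+}
\]
Euclidean balls of radius $\delta$. Applying this to a cover of $S$ by $d_a$-balls of radius $\delta$ centred at the points of a maximal $\delta$-separated set for $d_a$ yields
\[
N_{d_E}(S,\delta)\;\le\; C'\,\delta^{h_a-\dim U^+}\,N_{d_a}(S,\delta).
\]
Taking logarithms, dividing by $\log(1/\delta)$, and passing to $\liminf$ as $\delta\to 0$ gives $\dimm S\le \dima S+(\dim U^+-h_a)$, which rearranges to the claimed bound.

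The only technical care needed is that $d_a$ is merely a quasi-metric, so the standard passage between packing and covering numbers carries multiplicative constants depending on the quasi-triangle constant of $d_a$; these disappear in the $\liminf$ and present no real obstacle. The argument is otherwise purely volumetric and, modulo these constants, essentially reduces to the volume identity $\mathrm{vol}_E(B^{U^+,d_a}_\delta)\asymp\delta^{h_a}$ read off from Remark~\ref{remark1}.
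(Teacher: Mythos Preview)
Your argument is correct and follows essentially the same route as the paper's proof: both use that a $d_a$-ball of radius $\delta$ is a Euclidean box of side lengths $2\delta^{c_j}$, derive the first equality by a direct volume count, and obtain the inequality by covering each such box with $O(\delta^{h_a-\dim U^+})$ Euclidean $\delta$-balls to compare $N_{d_E}(S,\delta)$ with $N_{d_a}(S,\delta)$. Your version is slightly more explicit (the grid for the lower bound, the product of ceilings, and the remark on the quasi-metric packing/covering constants), but there is no genuine difference in strategy.
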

\begin{proof}
A $\delta$-ball for $d_a$ is a Euclidean box with side lengths $2\del^{c_i}$, so that any bounded set in $U^+$ can be covered by $O(\delta^{-h_a})$ balls of radius $\delta$ for $d_a$. Conversely, for volume reasons, one needs at least $O(\delta^{-h_a})$ $\delta$-balls for $d_a$ to cover any non-empty open set in $U^+$. This shows the first equality.

For the general inequality, let $u = \dim U^+$. 
Each $d_a$-ball of radius $\delta$ can be covered by at most 
$\delta^{h_a-u}$ boxes of side lengths 2 $\delta$ which in turn can be covered by the same number of Euclidean balls (up to a multiplicative constant, say $C$).
Thus $N_{d_a}(S,\delta) \geq C \delta^{h_a-u}N_{d_E}(S,\delta)$. Taking logarithms, dividing by $\log 1/\de$, and taking 
$\del\to 0$ gives the result.
\end{proof}

\subsection{Constructing a measure of large entropy}

We refer the reader to \cite[\S 2.2]{ELW} for the definition of relative entropy with respect to an infinite countably generated $\sigma$-algebra ; in particular, if $\cP$ is any countable partition of $Y$, then $H_{\mu}(\cP|X)$ will denote the relative entropy of $\cP$ with respect to the $\sigma$-algebra $\pi^{-1}(\cB_X)$, i.e.\ the inverse image under $\pi$ of the Borel $\sigma$-algebra $\cB_X$ on $X$.
Finally, $h_\mu(a|X)$ denotes the relative entropy of the transformation $a=a_1:Y\to Y$ for the measure $\mu$ (relative to $X$), i.e.
\[ h_\mu(a|X) 
\defn \sup_{\cP} \inf_{q\in\Z^+} \frac{1}{q} H_{\mu}(\cP^{(q)} | X),\]
where the supremum runs over countable partitions $\cP$ with $H_\mu(\cP|X)<\infty,$ and 
$\cP^{(q)}=\bigvee_{i=0}^{q-1} a^{-i}\cP$ denotes the join of the preimages $a^{-i} \cP$.
We can now state the main result of this section. The reader might benefit from reviewing Definition~\ref{def:heavy lattice} first.

\begin{prop}[Lower bound on the entropy]
\label{largeentropy}
Let $x$ be a lattice in $\cH(\eta_i)$ for some sequence $\eta_i\to 0$ and let $y\in \pi^{-1}(x)$. 
Furthermore, let $$\mu^0\in \set{\del_x^T:T\in\Z_+}^\om\cap \crly{P}(X, (\eta_i)).$$
For any $S\subset W_y^+$, where $W_y^+\subset\pi^{-1}(x)$ is the  injective unstable leaf at $y$ in the fiber,
there exists an $a$-invariant $\mu\in\crly{P}(Y)$ satisfying:
\begin{enumerate}
\item\label{pprop1} $\pi_*\mu=\mu^0$,
\item\label{pprop2} $\Supp\mu\subset\bigcap_{s\in\Z^+}\overline{\bigcup_{t\geq s}a_tS}$,
\item\label{pprop3} $h_\mu(a|X) \geq \dima S\geq\underline{\dim}_MS +h_a - \dim U^+$.
\end{enumerate}
Furthermore, if $\cP$ is any finite partition of $Y$ satisfying:
\begin{itemize}
\item $\cP$ contains an atom $P_\infty$  
of the form $\pi^{-1}(P_\infty^0)$, where $X\smallsetminus P^0_\infty$ has compact closure and 
$\psi_i|_{P_\infty^0}\equiv 0$ for some $i$ 
($\psi_i$ is as in Definition~\ref{def:heavy lattice}).
\item  $\forall P\in\cP\smallsetminus\{P_\infty\},\ \diam P<r$, with $r\in(0,\frac{1}{2})$ such that 
any $d_a$-ball of radius $3r$ has Euclidean diameter smaller than the fiber injectivity radius on $Y\smallsetminus P_\infty$,
\item $\forall P\in\cP,\ \mu(\partial P)=0$,
\end{itemize}
then, for all $q\geq 1$,
$ \frac{1}{q}H_\mu(\cP^{(q)}|X) \geq \dima S-D\eta_i,$
where $D$ is a constant depending only on the $c_i$'s and the dimension $d$.

\end{prop}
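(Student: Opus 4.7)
The plan is to realize $\mu$ as a weak-$*$ accumulation point of empirical measures built on $d_a$-separated subsets of $S$ averaged along the $a$-orbit, and then extract the entropy bound from the geometry of those approximations.

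Fix a sequence $T_m\nearrow\infty$ in $\Z_+$ with $\del_x^{T_m}\to\mu^0$, set $n_m=T_m$, and let $j$ denote the index in the partition hypothesis (so $\psi_j|_{P_\infty^0}\equiv 0$). Pick the scale $\delta_m=re^{-n_m(1-\eta_j)}$ (with a vanishing correction absorbed into the exponent) and a maximal $d_a$-$\delta_m$-separated subset $S_m\subset S$ of cardinality $N_m=N_{d_a}(S,\delta_m)$. Set $\nu_m=N_m^{-1}\sum_{z\in S_m}\del_z\in\crly{P}(Y)$, supported on $\pi^{-1}(x)$, and
$$\mu_m\defn\frac{1}{n_m}\sum_{i=0}^{n_m-1}(a^i)_*\nu_m,$$
and let $\mu$ be a weak-$*$ subsequential limit of $(\mu_m)$. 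Since $\pi_*\mu_m=\del_x^{n_m}\to\mu^0\in\crly{P}(X)$ and the fibres of $\pi$ are compact tori, no mass escapes, so $\pi_*\mu=\mu^0$ is a probability and $\mu\in\crly{P}(Y)$, giving item~(1). The $a$-invariance is a Krylov--Bogolyubov computation: $\|a_*\mu_m-\mu_m\|\le 2/n_m\to 0$. Item~(2) follows from $\supp\mu_m\subset\bigcup_{i=0}^{n_m-1}a^iS$ together with the fact that the initial window $i<s$ carries vanishing mass as $m\to\infty$.

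For the entropy, fix a finite partition $\cP$ as in the statement and let $G_m=\{0\le i<n_m:a^ix\notin P_\infty^0\}$, $B_m=[0,n_m)\sm G_m$. Since $\mathbf 1_{P_\infty^0}\le 1-\psi_j$, the time average gives $|B_m|/n_m=\del_x^{n_m}(P_\infty^0)\le 1-\del_x^{n_m}(\psi_j)\to 1-\mu^0(\psi_j)\le\eta_j$. The key separation is: if $z,z'\in S_m$ lie in a common atom of $\cP^{(n_m)}$, then for each $i\in G_m$ the images $a^iz,a^iz'$ share an atom of $\cP\sm\{P_\infty\}$, hence are at $d_E$-distance $<r$ and, by the injectivity-radius hypothesis on $d_a$-balls of radius $3r$, at $d_a$-distance $<3r$. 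Combined with the scaling $d_a(a^iz,a^iz')=e^id_a(z,z')$ of Remark~\ref{remark1}\eqref{1111} at $i=\max G_m\ge n_m(1-\eta_j)-O(1)$, this forces $d_a(z,z')<\delta_m$, contradicting $\delta_m$-separation. Since $\nu_m$ is uniform on $S_m\subset\pi^{-1}(x)$, the conditional entropy collapses to the unconditional one and
$$H_{\nu_m}(\cP^{(n_m)}|X)=\log N_m\ge n_m\,\dima S\,(1-\eta_j)(1-o(1)).$$

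To convert this single-fibre entropy into a bound on $\tfrac{1}{q}H_\mu(\cP^{(q)}|X)$, block $[0,n_m)$ into $K_m=\lfloor n_m/q\rfloor$ pieces of length $q$ and, for each shift $\ell\in\{0,\dots,q-1\}$, form $\mu_{m,\ell}^q=K_m^{-1}\sum_{k<K_m}(a^{kq+\ell})_*\nu_m$. Subadditivity of conditional entropy gives $\sum_kH_{(a^{kq+\ell})_*\nu_m}(\cP^{(q)}|X)\ge H_{(a^\ell)_*\nu_m}(\cP^{(K_mq)}|X)$, and the separation argument, applied now to $a^\ell S_m$ and $\cP^{(K_mq)}$, makes the right-hand side $\log N_m-o(\log N_m)$. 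Concavity of conditional entropy then yields $H_{\mu_{m,\ell}^q}(\cP^{(q)}|X)\ge q\,\dima S(1-\eta_j)-o(q)$, uniformly in $\ell$. Since $\mu_m=\tfrac{1}{q}\sum_{\ell=0}^{q-1}\mu_{m,\ell}^q$ up to $O(q/n_m)$ in total variation, one more application of concavity together with the continuity of $H_{(\cdot)}(\cP^{(q)}|X)$ at $\mu$ (ensured by $\mu(\partial P)=0$) lets us pass to the limit, giving $\tfrac{1}{q}H_\mu(\cP^{(q)}|X)\ge\dima S-D\eta_j$ with $D\le\dim U^+\le d$. The main obstacle is coordinating the two concavity/subadditivity steps with the $O(q\log|\cP|)$ truncation losses from $q\nmid n_m$ and keeping these errors $o(1)$ in the limit. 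Once this is handled, letting $P_\infty$ retreat deeper into the cusp makes $j\to\infty$ so $\eta_j\to 0$, giving $h_\mu(a|X)\ge\dima S$; Lemma~\ref{relating dimensions} then converts this to the claimed $\dimm S+h_a-\dim U^+$ bound, completing item~(3).
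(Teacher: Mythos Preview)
Your overall architecture---separated sets in $S$, empirical measures $\nu_m$, orbit averages $\mu_m$, tightness via $\pi_*\mu_m\to\mu^0$, the block subadditivity/concavity argument to pass from $H_{\nu_m}(\cP^{(n_m)}|X)$ to $\tfrac1qH_{\mu_m}(\cP^{(q)}|X)$, and the limit via $\mu(\partial P)=0$---matches the paper's proof. The gap is in your ``key separation'' step.

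You claim that if $z,z'\in S_m$ share a $\cP^{(n_m)}$-atom and $i\in G_m$, then from $d_E(a^iz,a^iz')<r$ on the torus and the injectivity-radius hypothesis one obtains $d_a(a^iz,a^iz')<3r$, whence the scaling relation at $i=\max G_m$ forces $d_a(z,z')<\delta_m$. This inference is not valid. The points $a^iz$ and $a^iz'$ lie on the same $U^+$-orbit, but their $U^+$-displacement is $a^iua^{-i}$ with $|a^iua^{-i}|_a=e^i|u|_a$; for $i$ large this is far outside the injective leaf $W_{a^iz}^+$. Being $d_E$-close on the torus $\R^d/a^ix$ only says that $a^iua^{-i}$ is close to \emph{some} lattice point of $a^ix$, not that it is close to $0$ in $U^+$. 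The injectivity-radius hypothesis asserts that the orbit map is injective on $d_a$-balls of radius $3r$; it does not invert the implication to say that torus-close points on a $U^+$-orbit are $d_a$-close. Consequently your conclusion $H_{\nu_m}(\cP^{(n_m)}|X)=\log N_m$ (one point per atom) is unfounded.

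The paper handles exactly this wraparound issue by the inductive covering Lemma~\ref{covering}: one covers $E_{y,T}=\{z\in W_y^+:d_E(a_tz,a_ty)\le r\ \forall t\notin I\}$ by $Ce^{D|I\cap[1,T]|}$ many $d_a$-balls of radius $re^{-T}$, paying a factor $e^D$ at each bad time and using the constraint at good times to keep the count fixed. With the paper's scale $\rho_k=e^{-n_k}$ this gives at most $Ce^{D|I\cap[1,n_k]|}$ points of $S_k$ per $\cP^{(n_k)}$-atom, hence $H_{\nu_k}(\cP^{(n_k)})\ge\log|S_k|-D|I\cap[1,n_k]|-O(1)$, and the rest of your argument goes through unchanged. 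Your attempted shortcut via the single time $\max G_m$ cannot substitute for this induction.
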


The proof of Proposition~\ref{largeentropy} will follow the strategy used to derive the variational principle for the topological entropy, as in \cite[\S 5.3.3]{ELW}, but there is a slight complication here, because the space $Y$ of grids is not compact.
To solve this problem, we will need Lemma~\ref{covering} below, which is inspired by \cite[Lemma~4.5]{elmv}.

\begin{lem}
\label{covering}
Let $P_\infty^0\subset X$ be such that $X\smallsetminus P_\infty^0$ has compact closure. Set $P_{\infty} = \pi^{-1}(P_\infty^0)$  and fix $0<r<1$ such that any $d_a$-ball of radius $3r$ has Euclidean diameter smaller than the fiber injectivity radius on $Y\smallsetminus P_\infty$.
Let $y\in Y\smallsetminus P_\infty$ and set $I=\{t\in\Z^+\ |\ a_ty\in P_\infty\}$.
For any non-negative integer $T$, let
\[ E_{y,T} = \{ z\in W_y^+ \ |\ 
\forall t\in\{1,\dots,T\}\smallsetminus I,\, d_E(a_ty,a_tz)\leq r\}.\]
Then one can cover $E_{y,T}$ by $Ce^{D|I\cap\{1,\dots,T\}|}$ $d_a$-balls of radius $\del_T = re^{-T}$,
where $C$ is a constant depending on $y$, $r$ and $a$, and $D$ is a constant depending on $a$ and the dimension $d$. 
In particular, they are independent of $T$.
\end{lem}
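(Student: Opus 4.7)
The plan is to argue by induction on $T$, maintaining a cover of $E_{y,T}$ by $d_a$-balls of radius $\delta_T = re^{-T}$ and tracking how the count evolves from step $T$ to $T+1$. Parametrizing $W_y^+$ by the orbit map from $B_{r_y}^{U^+,d_E}$, the action of $a_{T+1}$ becomes the linear dilation $u \mapsto a_{T+1}u$ on $U^+ \subset \R^d$, which scales $d_a$ by $e^{T+1}$ (Remark~\ref{remark1}(\ref{1111})). For the base case $T=0$, the set $W_y^+$ is $d_a$-bounded by a constant depending on $y$ and the $c_j$, and hence is covered by some $N_0$ $d_a$-balls of radius $\delta_0=r$, with $N_0$ depending on $y$, $r$ and $a$.

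For the inductive step, I consider a single ball $B$ of the time-$T$ cover and estimate the number of $d_a$-balls of radius $\delta_{T+1}$ needed to cover $B \cap E_{y,T+1}$. If $T+1 \in I$ (a \emph{bad step}), no new constraint is added and I simply subdivide $B$; since a $d_a$-ball of radius $\rho$ is a Euclidean box with sides $2\rho^{c_j}$, this requires a constant times $e^{h_a}$ sub-balls, the constant depending only on $a$.

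The heart of the argument, which I expect to be the main obstacle, is the \emph{good step} $T+1 \notin I$. The constraint $d_E(a_{T+1}y, a_{T+1}z) \le r$ says that $|a_{T+1}u - \gamma|_E \le r$ for some $\gamma \in a_{T+1}x$, which, using the normalization $\max_i c_i = 1$ and $r<1$, translates into $|u - a_{T+1}^{-1}\gamma|_a \le \delta_{T+1}$. A priori $\gamma$ could depend on $u \in B$ and produce many balls; the crux is to show that $\gamma$ is \emph{independent} of $u$. The image $a_{T+1}B$ is a $d_a$-ball of radius $\delta_T e^{T+1} = re < 3r$, so the hypothesis on $r$ gives $d_E\text{-}\diam(a_{T+1}B) < \iota_0$, where $\iota_0$ denotes the uniform fiber injectivity radius on $Y \setminus P_\infty$. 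Moreover, since $B_r^{U^+,d_E} \subseteq B_{3r}^{U^+,d_a}$ (using $r<1$ and the normalization), we get $2r \le d_E\text{-}\diam(B_{3r}^{U^+,d_a}) < \iota_0$, hence $r < \iota_0/2$. Since $a_{T+1}y \notin P_\infty$, the shortest non-zero vector of $a_{T+1}x$ has length at least $2\iota_0$. If $u_1,u_2 \in B$ admit $\gamma_1,\gamma_2 \in a_{T+1}x$ satisfying the constraint, then
\[ |\gamma_1 - \gamma_2|_E \le 2r + d_E\text{-}\diam(a_{T+1}B) < 2\iota_0,\]
forcing $\gamma_1 = \gamma_2$. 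Consequently $B \cap E_{y,T+1}$ sits in a single $d_a$-ball of radius $\delta_{T+1}$, and the count is preserved at good steps.

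Combining the two cases, after $T$ inductive steps with $k = |I \cap \{1,\dots,T\}|$ bad steps, the cover has at most $N_0 \cdot (C_0 e^{h_a})^k$ balls for some $C_0$ depending on $a$, yielding the lemma with $C = C(y,r,a)$ and $D = h_a + \log C_0$ depending only on $a$ and the dimension $d$.
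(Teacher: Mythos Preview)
Your proposal is correct and follows the same inductive strategy as the paper: at bad times $T\in I$ subdivide each $d_a$-ball into $e^D$ smaller ones, and at good times show that the new constraint confines the intersection of $E_{y,T}$ with each ball of the previous cover to a single $d_a$-ball of the next radius---your lattice-point uniqueness argument via the short-vector bound is a coordinate rendering of the paper's geometric observation that a fiber $d_E$-ball of radius $r$ meets the injective unstable leaf in a single $d_a$-ball of radius $r$. One small imprecision worth fixing: the center $a_{T+1}^{-1}\gamma$ need not lie in $U^+$, so $|\cdot|_a$ is not directly defined there; replace $\gamma$ by its $U^+$-projection (which can only decrease the Euclidean distance to $a_{T+1}u\in U^+$) and the rest goes through verbatim.
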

\begin{proof}

Before we start the proof, we make the following observation: for $y',z'\notin P_\infty$  in the same fiber,
the intersection  $B_r^{Uy',d_E}(y')\cap W_{z'}^+$ is contained in $B_r^{W_{z'}^+, d_E}(z'')$ for some $z''\in W_{z'}^+$ since the Euclidean distance $d_E$ on the fiber $Uy'=\pi^{-1}(\pi(y'))$ restricts to a Euclidean distance on the injective unstable leaf at $z'$. 
Moreover, since $r<1$ and by~\eqref{cnormalize}, 
Euclidean $r$-balls are contained in $d_a$-balls and so we conclude that
\eqlabel{ball inclusion1}{
B_r^{Uy',d_E}(y')\cap W_{z'}^+ \subset B_{r}^{W_{z'}^+, d_a}(z'').
} 
We prove the lemma by induction on $T$.\\
\underline{$T=0$:}\hspace{3pt} 
By \eqref{1110} of Remark~\ref{remark1}, the number of balls of radius $\delta_0=r$ for the metric $d_a$ needed to cover $W_y^+$ is bounded by a integer constant $C$ depending on $a$, $r$ and $y$, so that the lemma holds in this case.\\
\underline{$T-1\to T$:}\hspace{3pt}
Choose $D$ such that any $d_a$-ball of radius $\delta$ on $U^+$ can be covered by 
$e^D$ $d_a$-balls of radius $\delta/e$. Assume for clarity that $e^D$ is an integer.
By the induction hypothesis, $E_{y,T-1}$ can be covered by $N_{T-1}\defn Ce^{D|I\cap\{1,\dots,T-1\}|}$ $d_a$-balls of radius 
$\delta_{T-1}=re^{-T+1}$.

If $T\in I$, we simply cover each $\delta_{T-1}$-ball in $W_y^+$ by $e^D$ balls of radius $\delta_T = re^{-T}$ for $d_a$, and get a cover $E_{y,T}$ with cardinality $N_T=e^DN_{T-1}$ by $d_a$-balls.

If $T \notin I$, we need to cover $E_{y, T}$ by $N_T=N_{T-1}$ $d_a$-balls of radius $\delta_T$.
Denote the above cover of $E_{y,T-1}$ by $ \{ B_{\del_{T-1}}^{W_y^+,d_a}(z_i) ; i =1, \dots, N_{T-1} \}.$
As $E_{y,T} \subset E_{y,T-1}$, the set  $\{ E_{y,T} \cap B_{\del_{T-1}}^{W_y^+,d_a}(z_i) ; i =1, \dots, N_{T-1} \}$ covers $E_{y,T}$.
We claim that for each $z_i$, there exists some $p_i $ with
$$E_{y,T}\cap B_{\del_{T-1}}^{W_y^+,d_a}(z_i) \subset B_{\del_T}^{W_{z}^+,d_a}(p_i),$$
so that $E_{y,T}$ is actually covered by $\{ B_{\del_T}^{W_{z}^+,d_a}(p_i)\}$ i.e. by $N_T = N_{T-1}$ $d_a$-balls of radius $\del_T$.
Observe that
$$E_{y,T}\cap B_{\del_{T-1}}^{W_z^+,d_a}(z) \subset a_T^{-1}\pa{B_r^{Ua_Ty,d_E}(a_Ty)\cap a_TB_{\del_{T-1}}^{W_z^+,d_a}(z)}$$
By our choice of $r$, the fact that $a_Ty\notin P_\infty$ (and hence $ a_Tz\notin P_\infty$) and \eqref{eq: scaling}, the map $a_T$ scales $d_a$ by a factor of $e^T$ and we conclude that 
\eqlabel{eq:1621}{
 a_TB_{\del_{T-1}}^{W_z^+,d_a}(z) \cap B_r^{Ua_Ty,d_E}(a_Ty) = B_{er}^{W_{a_Tz}^+,d_a}(a_Tz) \cap B_r^{Ua_Ty,d_E}(a_Ty)} 
which is contained in a single $d_a$-ball of radius $r$
  by the observation~\eqref{ball inclusion1} (with $z'=a_Tz, y'=a_Ty$). Thus $E_{y,T}\cap B_{\del_{T-1}}^{W_z^+,d_a}(z) $ is contained in a single $d_a$-ball of radius $r e^{-T}$. This concludes 
the inductive step.

\end{proof}
Now we can prove Proposition~\ref{largeentropy}.

\begin{proof}[Proof of Proposition~\ref{largeentropy}]
The assumption that $\mu^0\in \set{\del_x^T:T\in\bZ_+}^\om\cap\crly{P}(X,(\eta_i))$ means that we may fix an increasing sequence of integers $(n_k)$ such that 
$$\mu^0_k\defn\frac{1}{n_k}\sum_{n=0}^{n_k-1}\delta_{a^nx}\wstar \mu^0\in \crly{P}(X,(\eta_i)).$$
Then, for each $k\geq 1$, let $S_k$ be a maximal $\rho_k$-separated subset of $S$, for the metric $d_a$, where $\rho_k\defn e^{-n_k}$.
Let $\nu_k\defn\frac{1}{|S_k|}\sum_{y\in S_k}\delta_y$ be the normalized counting measure on $S_k$ and
\[ \mu_k \defn \frac{1}{n_k}\sum_{n=0}^{n_k-1} a^n_*\nu_k.\]
Since $\pi:Y\to X$ is proper, and the measures $\pi_*\mu_k=\mu^0_k$ converge to a probability measure $\mu^0$ on $X$,
we conclude that the sequence of measures $(\mu_k)$ is tight, i.e. that
any weak-* converging subsequence of it converges to a probability measure.
Extracting a subsequence if necessary, we may assume without loss of generality that $(\mu_k)$ converges weak-* to some 
probability measure, which we denote by $\mu$. By continuity of $\pi_*$ we obtain 
$$\pi_*\mu =\pi_*\lim_k \mu_k = \lim_k \mu_k^0 = \mu^0$$
which is the item \eqref{pprop1} in the proposition.

By construction, $\Supp\mu$ is contained in the set of accumulation points of the forward orbit of $S$ under $(a_t)$ which establishes item \eqref{pprop2} in the proposition.

The right inequality in item \eqref{pprop3} follows directly from Lemma~\ref{relating dimensions}. 
For simplicity of notation, let $\beta\defn\dima S$, so that
\begin{equation}\label{eq:1532}
 \liminf_{k\to\infty}\frac{\log|S_k|}{n_k} \geq \beta.
 \end{equation}
To prove that $\mu$ also satisfies the the left inequality in item \eqref{pprop3} of the proposition saying that 
$h_\mu(a|X)\ge \beta$ we proceed as follows. 
Given $i$ 
we construct a partition $\cP$ of $Y$ for which 
\begin{equation}\label{eq:2326}
\frac{1}{q}H_\mu(\cP^{(q)}|X) \geq \beta -D\eta_i,
\end{equation}
where $D$ is as in Lemma~\ref{covering}. This implies that 
$$h_\mu(a|X)\ge h_\mu(a,\cP|X) =\lim_q \frac{1}{q}H_\mu(\cP^{(q)}|X)\ge \beta -D\eta_i,$$
and letting $i\to\infty$ we obtain the desired inequality $h_\mu(a|X)\ge \beta.$

To this end fix $i_0$ and consider $\eta=\eta_{i_0}$.  
Choose a set $P^0_\infty \subset X$ such that $X\smallsetminus P_\infty^0$ is compact, $\mu^0(\partial P_\infty^0) = 0$, 
and such that $\psi_{i_0}|_{P_0^\infty}\equiv 0$. Since $\mu^0\in \crly{P}(X,(\eta_i))$ it follows that $\mu^0(P_\infty^0)\le \eta_{i_0}=\eta$ and in turn,
$P_\infty = \pi^{-1} (P^0_\infty)$ satisfies $\mu(P_\infty)\leq\eta$ and $\mu(\partial P_\infty)=0$.
Since $(\mu_k)$ converges to $\mu$, we have for any $y\in\pi^{-1}(x)$, with $I=\{t\in\Z^+\ |\ a_ty\in P_\infty\}$ (note that $I$ depends only on $x$), 
\begin{equation}\label{eq:1530}
 \limsup_{T\to\infty} \frac{1}{n_k}|I\cap\{1,\dots,n_k\}| \leq \eta.
 \end{equation}
Then, let $r\in(0,\frac{1}{2})$ be as in Lemma~\ref{covering} and complement $P_\infty$ to a finite partition $\cP=\{P_\infty,P_1,\dots,P_\ell\}$ of $Y$ such that for every atom $P_i\neq P_\infty$ and every $x\in X$, the Euclidean diameter of $P_i\cap\pi^{-1}(x)$ is at most $r$, and such that for each $P\in\cP$, $\mu(\partial P)=0$, where $\partial P$ denotes the boundary of $P$.
To build such $\cP$, observe that around each point $z$ in $Y\setminus P_\infty$, there exists a ball $B_z$ around $z$ in $Y$ such that $\mu(\partial B_z)=0$, and 
\begin{equation}\label{diamfib}
\forall x\in X,\  \diam_E(B_z\cap\pi^{-1}(x))<r.
\end{equation}
A finite cover of $Y\smallsetminus P_\infty$ by such balls generates the desired partition by a simple disjointification procedure.

For $q\geq 1$, let $\cP^{(q)}=\bigvee_{p=0}^{q-1}a^{-p}\cP$.
For $n_k$ large, write the Euclidean division of $n_k-1$ by $q$
\[ n_k-1=qn'+s,\ \mbox{with}\ s\in\{0,\dots,q-1\}.\]
By subadditivity of the entropy with respect to the partition, for each $p\in\{0,\dots,q-1\}$,
\begin{align*}
H_{\nu_k}(\cP^{(n_k)}|X)
& \leq H_{a^{p}\nu_k}(\cP^{(q)}|X)+H_{a^{p+q}\nu_k}(\cP^{(q)}|X)+\dots\\
& \qquad\qquad +H_{a^{p+qn'}\nu_k}(\cP^{(q)}|X)+2q\log|\cP|.
\end{align*}
Summing those inequalities for $p=0,\dots,q-1$, and using the fact that entropy is a concave function of the measure, we obtain
\begin{align*}
qH_{\nu_k}(\cP^{(n_k)}|X)
& \leq \sum_{n=0}^{n_k-1}H_{a^n\nu_k}(\cP^{(q)}|X) + 2q^2\log|\cP|\\
& \leq n_k H_{\mu_k}(\cP^{(q)}|X) + 2q^2\log|\cP|
\end{align*}
and therefore
\begin{equation}\label{almost}
\frac{1}{q}H_{\mu_k}(\cP^{(q)}|X)
\geq \frac{1}{n_k}H_{\nu_k}(\cP^{(n_k)}|X) - \frac{2q\log|\cP|}{n_k}.
\end{equation}
Now since $\nu_k$ is supported on a single atom of the $\sigma$-algebra $\pi^{-1}(\cB_X)$, we have $H_{\nu_k}(\cP^{(n_k)}|X)=H_{\nu_k}(\cP^{(n_k)})$.
Moreover, we claim that
\begin{equation}\label{atoms}
H_{\nu_k}(\cP^{(n_k)}) \geq \log|S_k|-D|I\cap\{1,\dots,n_k\}| -D-\log C,
\end{equation}
where $C,D$ are the constants given by Lemma~\ref{covering}.
To see this, it suffices to show that an atom of $\cP^{(n_k)}$ contains at most $Ce^{D(|I\cap \{1,\dots,n_k\}|+1)}$ points of $S_k=\Supp\nu_k$.
This follows from Lemma~\ref{covering}.
Indeed, Equation (\ref{diamfib}) implies that
if $P$ is any non-empty atom of $\cP^{(n_k)}$, fixing any $y\in P$,
\[ S_k \cap P= S_k \cap [y]_{\cP^{(n_k)}} \subset E_{y,n_k-1}\] can be covered by $Ce^{D(|I\cap\{1,\dots,n_k\}|+1)}$ many $re^{-n_k}$-balls for $d_a$. 
Since $S_k$ is $\rho_k=e^{-n_k}$-separated with respect to $d_a$ and $r<\frac{1}{2}$, we get $$\card(S_k\cap[y]_{\cP^{(n_k)}})\leq Ce^{D(|I\cap\{1,\dots,n_k\}|+1)}.$$

Going back to (\ref{almost}), we find
\[ \frac{1}{q}H_{\mu_k}(\cP^{(q)}|X)
\geq \frac{1}{n_k}\big(\log|S_k| - D|I\cap\{1,\dots,n_k\}|
- D -\log C - 2q^2\log|\cP|\big).\]
Now the atoms of $\cP$ -- and hence of $\cP^{(q)}$ -- satisfy $\mu(\partial P)=0$, so we may let $k$ go to infinity
to obtain equation~\eqref{eq:2326} using \eqref{eq:1532} and \eqref{eq:1530}.

Finally the second part of the proposition regarding partitions satisfying the bullet-requirements follows by reviewing the proof
of~\eqref{eq:2326} and noting that the only properties of the constructed partition $\cP$ we used are those in the bullet list.
\end{proof}

\section{Maximal entropy and invariance}
\label{sec:invariance}
\label{sec:invariance}
In this section we recall some concepts and results from \cite{EL} and explain how they imply 
the following proposition, which is essential for the proof of Theorem~\ref{thm:main}.

\begin{prop}[Maximal entropy implies $U$-invariance]
\label{maxinv}
Let $\mu$ be an $a$-invariant probability measure on $Y$.
Then 
\[ h_\mu(a|X) \leq h_a\]
with equality if and only if $\mu$ is $U$-invariant.
\end{prop}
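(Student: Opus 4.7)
The plan has two parts.

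For the upper bound $h_\mu(a|X)\le h_a$, I would invoke the relative Margulis--Ruelle inequality. The relative entropy $h_\mu(a|X)$ captures the entropy of $a$ generated purely in the fiber direction of $\pi:Y\to X$. Since $a$ maps $\pi^{-1}(x)\simeq \bR^d/x$ to $\pi^{-1}(ax)$ affinely, with linear part $\diag{e^{c_1},\dots,e^{c_d}}$, the fiber Lyapunov spectrum is $\{c_1,\dots,c_d\}$, and the Ruelle inequality in this skew-product setup gives $h_\mu(a|X)\le \sum_{c_i>0}c_i=h_a$.

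For the equality case, the easy direction is that if $\mu$ is $U$-invariant, then since $U$ acts transitively on each fiber by translations, the conditional measures of $\mu$ on the fibers are Haar on the tori $\bR^d/x$; the hyperbolic fiberwise toral automorphism has Haar as its unique measure of maximal entropy (of entropy $h_a$), so equality holds in the Ruelle bound.

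The substantive direction --- that maximal relative entropy forces $U$-invariance --- is the core of the proposition, and I would address it using the Einsiedler--Lindenstrauss framework of leafwise measures \cite{EL}. In that framework, for any $a$-invariant probability measure and any closed, $a$-normalized unipotent subgroup $H$ contracted by $a^{-1}$, the contribution of $H$ to the entropy is bounded above by the logarithm of its Jacobian under $a$, with equality precisely when the $H$-leafwise measures of $\mu$ are Haar, i.e., when $\mu$ is $H$-invariant. Applied with $H=U^+$ relative to the factor $X$ (so that only fiber contributions appear), maximality $h_\mu(a|X)=h_a$ forces $U^+$-invariance. Running the same argument with $a^{-1}$, whose fiber-unstable direction is $U^-$ and for which the maximum relative entropy is again $h_a$ (because $\sum c_i=0$ gives $\sum_{c_i<0}(-c_i)=h_a$, and $h_\mu(a^{-1}|X)=h_\mu(a|X)$ since $a$ is invertible), yields $U^-$-invariance. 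Since $U=U^+U^-$, $\mu$ is $U$-invariant.

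The main obstacle will be the careful bookkeeping required to import the Einsiedler--Lindenstrauss machinery into our relative-to-$X$ setting: one must verify that the standard leafwise-measure and subordinated-$\sigma$-algebra constructions, together with the equality case of the entropy-contribution bound, transfer correctly when the ambient factor is $\pi^{-1}(\cB_X)$ rather than the trivial $\sigma$-algebra, and that partitions of the form produced in Proposition~\ref{largeentropy} can be arranged to be compatible with the subordinated $\sigma$-algebras that the equality argument requires. Once these technical compatibilities are established, the two pieces combine to give the stated dichotomy.
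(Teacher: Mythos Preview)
Your proposal is correct and follows essentially the same route as the paper: reduce to the Einsiedler--Lindenstrauss entropy-contribution machinery to obtain $U^{\pm}$-invariance from the equality cases for $a$ and $a^{-1}$ (using $\sum c_i=0$ so that $h_a=h_{a^{-1}}$, and $h_\mu(a|X)=h_\mu(a^{-1}|X)$), then combine via $U=U^+U^-$. The paper makes the relative-to-$X$ step concrete by showing that the $\sigma$-algebra $\cA=\cP_0^\infty\vee\pi^{-1}(\cB_X)$ is $U^-$-subordinate (since $G^-y\cap Uy=U^-y$) and that $h_\mu(a|X)=H_\mu(\cA\mid a^{-1}\cA)$, which is precisely the bookkeeping you flag as the main obstacle; the compatibility with the partitions of Proposition~\ref{largeentropy} that you mention is not needed here, only later in the proof of Theorem~\ref{mainthm}.
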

To prove Proposition~\ref{maxinv} we relate the \enquote{dynamical} relative entropy $h_\mu(a|X)$  
to some  relative 
\enquote{static} entropy $H_\mu(\cA_1|\cA_2)$
where the $\cA_i$ are countably generated $\sig$-algebras that encode the dynamics. 
For definitions and elementary properties of relative entropies of $\sig$-algebras we refer the reader to~\cite[Chapter 2]{ELW}.

\begin{defi}[7.25. of \cite{EL}]Let 
$G^- \defn \{ g\in G\ |\ a_tga_t^{-1}\to e\ \mbox{as}\ t\to\infty\}$ be the stable horospherical subgroup associated to $a$ and let $U^- = U\cap G^-$.
Let $\mu$ be an $a$-invariant measure on $Y$ and $U<G^-$ a closed $a$-normalized subgroup.
\begin{enumerate}
\item We say that a countably generated $\s$-algebra $\cA$ is \emph{subordinate to $U$} (mod $\mu$)  if for $\mu$-a.e. $y$, there exists $\de > 0$ such that
\[ B^{U}_\de\cdot y \subset [y]_{\cA} \subset B^{U}_{\de^{-1}}\cdot y.\] 
\item
We say that $\cA$ is \emph{$a$-descending} if $a^{-1}\cA \subset \cA$.
\end{enumerate}
\end{defi}
\begin{thm}[Einsiedler-Lindenstrauss]\label{thm:EinLin}
Let $\mu$ be an $a$-invariant probability measure on $Y$.
If $\cA$ is a countably generated sub-$\sig$-algebra of the Borel $\sig$-algebra which is $a$-descending and $U^-$-subordinate
then $H_\mu(\cA|a^{-1}\cA)\le h_a$ with equality if and only if $\mu$ is $U^-$-invariant.
\end{thm}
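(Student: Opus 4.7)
My plan follows the leafwise-measure strategy of Einsiedler-Lindenstrauss. To every $a$-invariant probability measure $\mu$ on $Y$ and every closed $a$-normalized subgroup $U<G^-$, one associates a system of leafwise measures $\{\mu^U_y\}_{y\in Y}$ on the $U$-orbits, defined up to a positive scalar, and characterized by the following property: for any countably generated sub-$\sig$-algebra $\cA$ which is $U$-subordinate, the conditional measures $\mu^\cA_y$ for $\mu$ given $\cA$ are proportional to the restriction of $\mu^U_y$ to the atom $[y]_\cA$. The equivariance of this assignment under $a$ takes the form $\mu^U_{ay}\propto a_*\mu^U_y$, and $\mu$ is $U$-invariant if and only if $\mu^U_y$ is, $\mu$-almost surely, proportional to Haar measure on $U$.

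First, I would rewrite the conditional entropy as a leafwise ratio. Since $a^{-1}\cA\subset\cA$ we have $[y]_\cA\subset [y]_{a^{-1}\cA}$, and both atoms are supported on the $U$-orbit through $y$ by subordination, so
\[ H_\mu(\cA\mid a^{-1}\cA) = \int_Y -\log \frac{\mu^U_y([y]_\cA)}{\mu^U_y([y]_{a^{-1}\cA})}\, d\mu(y). \]
Second, to obtain the upper bound $h_a$, I would use the transformation rule $\mu^U_{ay}\propto a_*\mu^U_y$ together with the identity $[y]_{a^{-1}\cA}=a^{-1}[ay]_\cA$. Since $a$ contracts $U$ with Jacobian $e^{-h_a}$, a change of variables identifies the ratio inside the logarithm, up to the Jacobian, with the corresponding ratio at the point $ay$. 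Combining this with the $a$-invariance of $\mu$ and Jensen's inequality applied to the concave function $-\log$ yields
\[ H_\mu(\cA\mid a^{-1}\cA)\leq h_a. \]

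Third, and this is the main obstacle, the equality case requires promoting equality in Jensen to $U$-invariance. Equality forces the integrand to be $\mu$-a.e.\ equal to its integral, which, after unwinding the transformation rule, amounts to saying that the Radon-Nikodym derivative $\frac{d\, a_*\mu^U_y}{d\mu^U_{ay}}$ is $\mu$-a.e.\ constant (and, by volume-normalization, equal to $e^{-h_a}$). The delicate step is to propagate this infinitesimal constancy into genuine translation-invariance of $\mu^U_y$. Here I would use the uniqueness-up-to-scalar of leafwise measures together with Poincar\'e recurrence of $a$ on $Y$: iterating the equivariance $\mu^U_{ay}\propto a_*\mu^U_y$ and exploiting that $a$ mixes scales on $U$, one shows that $\mu^U_y$ must be invariant under a dense set of translations on $U$, hence is Haar for $\mu$-a.e.\ $y$. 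By the characterization of $U$-invariance recalled above, this is equivalent to $U$-invariance of $\mu$.
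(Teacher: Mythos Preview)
The paper does not prove this in any detail: it reduces to the ergodic case by disintegration and then cites \cite[Proposition~7.34]{EL} and \cite[Theorem~7.9]{EL} as black boxes. You are instead sketching the content of those cited results, and the framework you describe---leafwise measures $\mu^{U^-}_y$, their equivariance $\mu^{U^-}_{ay}\propto a_*\mu^{U^-}_y$, and the characterization of $U^-$-invariance as $\mu^{U^-}_y$ being Haar a.e.---is exactly the machinery behind them.

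However, your derivation of the inequality $H_\mu(\cA\mid a^{-1}\cA)\le h_a$ has a gap. The claim that ``a change of variables identifies the ratio inside the logarithm, up to the Jacobian, with the corresponding ratio at $ay$'' is only true when $\mu^{U^-}_y$ is already Haar; for a general leafwise measure the equivariance relation leaves you with the integral of the logarithm of the (unknown) proportionality constant, and bounding that by $h_a$ is precisely the assertion under proof, so nothing has been gained. The actual argument first compares to Haar via Gibbs' inequality (which is indeed a form of Jensen, applied on each $a^{-1}\cA$-atom): writing $[y]_\cA=V_y\cdot y$, one obtains
\[
H_\mu(\cA\mid a^{-1}\cA)\ \le\ -\int_Y\log\frac{m_{U^-}(V_y)}{m_{U^-}(a^{-1}V_{ay}a)}\,d\mu(y),
\]
and it is for this \emph{Haar} ratio that the Jacobian $e^{h_a}$ and the telescoping via $a$-invariance of $\mu$ evaluate the right-hand side as exactly $h_a$. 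Equality in Gibbs then forces the conditional measures to be normalized Haar on each atom, from which $U^-$-invariance of $\mu$ follows. Your treatment of the equality case via a constant Radon--Nikodym derivative and recurrence is in the right spirit but would need to be rewired once the inequality step is fixed. Finally, the reduction to the ergodic case (which the paper does make) is not cosmetic: the leafwise-measure machinery in \cite{EL}, including the recurrence you invoke, is developed under an ergodicity assumption.
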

\begin{proof}
By considering the ergodic decomposition one sees that it is enough to prove the theorem in the case
$\mu$ is ergodic. Under the ergodicity assumption the statement follows directly from combining
\cite[Proposition~7.34]{EL} and  \cite[Theorem~7.9]{EL}.
\end{proof}

The following lemma furnishes the link between 
the relative dynamical entropy $h_\mu(a|X)$ and the relative entropy $H_\mu(\cA_1|\cA_2)$ for suitable 
$\cA_i$.
If $\cP$ is a partition of $Y$, we write for any integer $m$,
\[
\cP_m^\infty=\bigvee_{k=m}^\infty a^{-k}\cP.
\]
Recall also that a partition $\cP$ is said to be a two-sided generator for $a$ with respect to an $a$-invariant probability measure 
if the Borel $\sigma$-algebra is generated up to null sets by the union of all partitions $\bigvee_{-m}^m a^{-k}\cP$, $m\geq 1$.

\begin{lem}
\label{reco}
Assume that $\mu$ is an $a$-invariant probability measure on $Y$ and $\cP$ is a countable partition that is a two-sided generator for $a$ with respect to  $\mu$.
Let $\cA$ be the $\sigma$-algebra generated by $\cP_0^\infty\vee\pi^{-1}(\cB_X)$.
Then
\[ h_\mu(a|X) = H_\mu(\cA|a^{-1}\cA).\]
\end{lem}
\begin{proof}
Let $\cP$ be a countable partition that is a two-sided generator for $a$.
By \cite[Proposition~2.19 and Theorem~2.20]{ELW}, writing $\cC$ for $\pi^{-1}(\cB_X)$, we have
\[ h_\mu(a|X) = H_\mu(\cP|\cP_1^\infty\vee\cC)
= H_\mu(\cP_0^\infty\vee\cC|\cP_1^\infty\vee\cC).\]
Since $\cC$ is $a$-invariant, we indeed find
\[ h_\mu(a|X) = H_\mu(\cA|a^{-1}\cA).\]
\end{proof}
We now prove Proposition~\ref{maxinv} by constructing (almost by citation) a two-sided generator $\cP$ for $a$ modulo 
$\mu$ with the property that $\cP_0^\infty\vee \pi^{-1}(\cB_X)$ is $U^-$-subordinate, and then the statement follows
by combining Theorem~\ref{thm:EinLin} with Lemma~\ref{reco}.

\begin{proof}[Proof of Proposition~\ref{maxinv}]
Writing the ergodic decomposition $\mu=\int\mu_y^{\cE}\,\mathrm{d}\mu(y)$, we have
\[ h_\mu(a|X)=\int h_{\mu_y^{\cE}}(a|X)\,\mathrm{d}\mu(y),\] 
so it is enough to prove the proposition for $\mu$ ergodic.

We can then use \cite[Proposition~7.44]{EL} to obtain a countable partition $\cP$ that is a generator for $a\mod\mu$, and such that 
$\cP_0^\infty$
is $a$-descending and subordinate to $G^-$.
Let
\[ \cC \defn \pi^{-1}(\cB_X) \quad\mbox{and}\quad
 \cA \defn \cC\vee\cP_0^\infty.\]
Then $\cA$ is clearly countably generated and $a$-descending, and we claim that it is also $U^-$-subordinate.
Indeed, since $[y]_{\cC}=\pi^{-1}(\pi(\{y\}))$ is equal to the orbit of $y$ under the unipotent radical $U$, we get $[y]_{\cA} = U\cdot y \cap [y]_{\cP_0^\infty}$.
But $\cP_0^\infty$ is subordinate to $G^-$, and for $\delta>0$,
\eqlabel{inclusion2}{
B_{\delta_1}^{U^-}\cdot y \subset B_\delta^{G^-}\cdot y\cap U\cdot y
\quad\mbox{and}\quad
B_{\delta^{-1}}^{G^-}\cdot y\cap U\cdot y \subset B_{\delta_1^{-1}}^{U^-}\cdot y,
}
for some constant $\delta_1$ depending on $\delta$ and $y$, because $G^-y\cap Uy=U^-y$.
This shows that, for almost every $y$, for some $\delta>0$,
\[ B_\delta^{U^-}\cdot y \subset [y]_{\cA}
\subset B_{\delta^{-1}}^{U^-}\cdot y,\]
i.e. that $\cA$ is subordinate to $U^-$.

By Lemma~\ref{reco} we have $h_\mu(a|X)=H_\mu(\cA|a^{-1}\cA)$, and so Theorem~\ref{thm:EinLin} shows that 
$h_\mu(a|X)\le h_a$ with equality if and only if $\mu$ is $U^-$-invariant. Moreover, if  
$h_\mu(a|X)= h_a$ then 
$$h_\mu(a^{-1}|X) =h_\mu(a|X)= h_a = h_{a^{-1}},$$
where the last equality follows from the fact that $\sum_{j=1}^d c_j = 0$. 
We then apply the same reasoning to $a^{-1}$ 
and find that $\mu$ is also $U^+$-invariant. 
This completes the proof since $U$ is generated by $U^+$ and $U^-$, because we assume that for any $1\le j\le d$, $c_j\ne 0$.

%
\end{proof}

\section{Proof of the main theorem}
\label{sec:mt}

Using the results of the preceding two sections, we now state and prove 
Theorem~\ref{mainthm}, which is the main result of this article.
We deduce Theorem \ref{thm:main} as a corollary.

\subsection{$\cO$-avoiding grids}
Before we can state the more precise version of Theorem~\ref{thm:main} that we will derive here, we need to set up some notation.

For a bounded open set $\cO\subset \bR^d$ and a heavy lattice $x$, our goal is to bound the dimension of 
the set 
$$\Ybe(x) = \set{y\in \pi^{-1}(x) : y \cap (\cup_{t\ge 0} a_{-t}\cO) \textrm{ is finite}}.$$ 
For an interval $I\subset \bR$, let  
\[ E^\cO_I = \set{y \in Y: y\cap (\cup_{t\in I} a_{-t}\cO)=\varnothing} = \set{y\in Y: \forall t \in I, a_ty\cap \cO = \varnothing}\]
and
\[E^\cO_I(x) =E^\cO_I\cap \pi^{-1}(x).\] 
It is more natural from the dynamical point of view to work with $E^\cO_\bR$ than with $\Ybe$ because it is $a_t$-invariant and closed. However, we insist on working with finite intersections instead of empty intersection to obtain $\liminf$ statements in our applications rather than $\inf$. Note that for a vector $v\in\bR^d$, either $a_tv\to\infty$ or $v_i=0$ for all $i\in J^+$ in which case 
$a_tv\to 0$. Since $\cO$ is bounded, we deduce that  
\eqlabel{eq:inclusion3}{
\Ybe(x) \subset \bigcup_{r\geq 0}\Ybr(x) \cup \set{y\in\pi^{-1}(x): \exists v\in y, \forall i\in J^+, v_i=0}.
}
Since  
\eqlabel{eq:lower dimensional}{
\dim_H \set{y\in\pi^{-1}(x): \exists v\in y, \forall i\in J^+, v_i=0} = d-\av{J^+}\le d-1.
}
we will focus on bounding the dimension of each $\Ybr(x)$. Finally, a nice feature of the set $\Ybr$ is the following simple 
observation which can be verified by the reader.
\begin{lem}\label{lem:gaining invariance} 
Let $y\in \Ybr$ and suppose $z$ is an accumulation point of the forward orbit $\set{a_ty}_{t>0}$. Then $z\in E^\cO_\bR$. 
In particular, any measure $\mu$ obtained by averaging along the forward trajectory of $y$ is supported in $E^\cO_\bR$.
\end{lem}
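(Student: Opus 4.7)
The claim that $z\in E^\cO_\bR$ amounts to showing $a_s z\cap \cO = \varnothing$ for every $s\in\bR$. I interpret ``accumulation point of the forward orbit'' in the only way that makes the ``in particular'' clause meaningful, namely: there exists a sequence $t_n\to\infty$ with $a_{t_n} y\to z$ in $Y$. Applying the continuous transformation $a_s$ yields $a_s z=\lim_{n\to\infty} a_{s+t_n}y$ in $Y$.

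The core of the proof is then a one-line contradiction argument based on the openness of $\cO$. Suppose there exists $v\in a_s z\cap \cO$. Convergence of grids in $Y\simeq G/G(\bZ)$ implies that on any bounded region of $\bR^d$ the point sets $a_{s+t_n}y$ converge pointwise (in Hausdorff sense) to $a_s z$, so one can choose $v_n\in a_{s+t_n}y$ with $v_n\to v$. Since $\cO$ is open, $v_n\in\cO$ for all large $n$. On the other hand, since $t_n\to\infty$, for $n$ large we also have $s+t_n\geq r$, which means $a_{s+t_n} y\cap \cO\neq\varnothing$, directly contradicting the defining property $y\in \Ybr=E^\cO_{[r,\infty)}$.

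For the ``in particular'' clause, let $\mu$ be a weak-$*$ limit of Ces\`aro averages $\mu_{T_n}=\frac{1}{T_n}\int_0^{T_n}\delta_{a_t y}\,dt$ (or their discrete analogues) along the forward trajectory of $y$. For any $z\in\Supp\mu$ and any open neighborhood $U$ of $z$ with $\mu(\partial U)=0$, the portmanteau theorem gives $\mu_{T_n}(U)\to\mu(U)>0$; since the contribution to $\mu_{T_n}(U)$ coming from any bounded window of times tends to $0$ as $T_n\to\infty$, the set $\{t>0:a_t y\in U\}$ must be unbounded. Running this through a countable neighborhood basis of $z$ and diagonalizing produces a sequence $t_n\to\infty$ with $a_{t_n}y\to z$, so the first part applies and gives $z\in E^\cO_\bR$.

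The main obstacle is essentially absent: the statement is a soft closedness/continuity assertion, reflecting the fact that $E^\cO_\bR$ is the largest $a_t$-invariant subset of $Y$ avoiding $\cO$, and any point accumulated at $t\to\infty$ by the orbit of a point in $\Ybr$ has its entire bi-infinite orbit in the avoidance set by running the clock backwards from times $t_n\geq r+|s|$.
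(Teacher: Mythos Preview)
Your argument is correct. The paper itself does not prove this lemma---it is stated as ``a simple observation which can be verified by the reader''---so there is no alternative approach to compare against; your write-up supplies exactly the routine verification the authors had in mind, namely that openness of $\cO$ together with local Hausdorff convergence of grids forces any $\omega$-limit point of the orbit to avoid $\cO$ at all times, and that any point in the support of a limiting averaged measure is such an $\omega$-limit point.
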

We are now in a position to state and prove our main results.
\begin{thm}[Heavy lattices have few bad grids]
\label{mainthm}
Fix a sequence $\eta_i\to 0$, and a bounded open set $\cO$ in $\bR^d$. 
Then, there exists $\del>0$ such that for any $r>0$ and any  $x\in \cH(\eta_i)$,
\[\dimm \Ybr(x)\leq d-\delta.\]
\end{thm}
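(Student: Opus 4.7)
The plan is to argue by contradiction along the strategy sketched in the introduction. Assuming the conclusion fails, I extract sequences $x_n\in\cH(\eta_i)$ and $r_n>0$ with $\dimm E^\cO_{[r_n,\infty)}(x_n) > d-1/n$. A slicing argument in the torus $\pi^{-1}(x_n)\simeq \bR^d/x_n$, using its local product structure along the directions $U^+$ and $U^-$, provides $y_n\in\pi^{-1}(x_n)$ and a set $S_n\subset W_{y_n}^+\cap E^\cO_{[r_n,\infty)}(x_n)$ with $\dimm S_n\to \dim U^+$; by Lemma~\ref{relating dimensions} this forces $\dima S_n\to h_a$. Since $x_n$ is heavy, I pick $\mu_n^0\in\{\delta_{x_n}^T:T\in\bZ_+\}^\omega\cap\crly{P}(X,(\eta_i))$ and feed $(x_n,y_n,S_n,\mu_n^0)$ into Proposition~\ref{largeentropy}, producing an $a$-invariant $\mu_n\in\crly{P}(Y)$ with $\pi_*\mu_n=\mu_n^0$ and $\Supp\mu_n\subset\bigcap_{s\in\bZ^+}\overline{\bigcup_{t\geq s}a_tS_n}\subset E^\cO_\bR$, the last inclusion from Lemma~\ref{lem:gaining invariance}. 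Compactness of $\crly{P}(X,(\eta_i))$ (Lemma~\ref{rem:1511}\eqref{r02}) together with properness of $\pi:Y\to X$ makes $(\mu_n)$ tight, and after extraction $\mu_n\wstar\mu\in\crly{P}(Y)$, still $a$-invariant, projecting to $\mu^0\in\crly{P}(X,(\eta_i))$, and supported in the closed set $E^\cO_\bR$.

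The heart of the proof is upgrading this to $h_\mu(a|X)=h_a$. For this I invoke the uniform partition estimate from the second half of Proposition~\ref{largeentropy}: fix $i_0$, and choose a single finite partition $\cP$ meeting the three bullet requirements simultaneously for $\mu$ and for each $\mu_n$ in the subsequence, which is achievable by a Baire-type diagonalization in the radii of the balls used to define $\cP$. For such $\cP$,
\[ \frac{1}{q}H_{\mu_n}(\cP^{(q)}|X)\geq \dima S_n-D\eta_{i_0}\quad\text{for all }n,q\geq 1. \]
Passing weak-$*$ in $n$ on this continuity partition, then letting $q\to\infty$ and finally $i_0\to\infty$, yields $h_\mu(a|X)\geq h_a$. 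Proposition~\ref{maxinv} then forces $\mu$ to be $U$-invariant.

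Finally I extract the contradiction. Disintegrating $\mu$ over $\pi$, for $\mu^0$-a.e.\ $x$ the fiber measure is Haar on $\pi^{-1}(x)\simeq\bR^d/x$; since $\Supp\mu\subset E^\cO_\bR$, Haar-a.e.\ coset $y=w+x$ must avoid the set $A=\bigcup_{t\in\bR}a_{-t}\cO\subset\bR^d$. This would force the projection $\pi_x(A)\subset\bR^d/x$, an open non-empty subset of a torus, to have zero Haar measure, which is absurd; this completes the proof. I expect the main obstacle to be the entropy convergence step above: coordinating the choice of $\cP$ so that it is a continuity partition for the entire family $\{\mu\}\cup\{\mu_n\}$, and verifying continuity of $\mu\mapsto\frac{1}{q}H_\mu(\cP^{(q)}|X)$ at $\mu$ along the weak-$*$ convergence in the non-compact space $Y$.
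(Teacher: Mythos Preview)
Your overall architecture matches the paper's proof, but the slicing step has a real gap. You claim that the local product structure $U^+\times U^-$ of the fiber alone lets you find $S_n\subset W_{y_n}^+\cap E^\cO_{[r_n,\infty)}(x_n)$ with $\dimm S_n\to\dim U^+$. This is false for arbitrary subsets of a product: the graph of a Weierstrass function in $[0,1]^2$ has lower box dimension arbitrarily close to $2$, while every slice in one coordinate direction is a single point. So a bare Fubini-type slicing inequality for $\dimm$ does not exist, and you have not invoked any specific property of $E^\cO_{[r_n,\infty)}$ that would rescue it.

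The paper closes this gap by first shrinking to an open $\cO'$ with $\overline{\cO'}\subset\cO$. Since for $v\in B_1^{U^-}$ the grids $a_t(vy)$ and $a_ty$ differ by a $U^-$-translation of norm $O(e^{-\alpha t})$, one gets $B_1^{U^-}\cdot E^\cO_{[r_m,\infty)}(x_m)\subset E^{\cO'}_{[r_m,\infty)}(x_m)$ for $m$ large. In a local product chart $V^+\times V^-$ this forces the $U^+$-slice of $E^{\cO'}_{[r_m,\infty)}(x_m)$ through any point to contain the full projection of $E^\cO_{[r_m,\infty)}(x_m)$ onto $V^+$; and the trivial inclusion $E\subset(\text{proj}_{V^+}E)\times V^-$ gives $\dimm(\text{proj}_{V^+}E)\geq\dimm E-\dim U^-\geq\dim U^+-\tfrac1m$. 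Thus the set $S_m$ with the required dimension lives in $E^{\cO'}_{[r_m,\infty)}(x_m)$, not $E^\cO_{[r_m,\infty)}(x_m)$, and one carries the closed invariant set $E^{\cO'}_\bR$ through the rest of the argument; since $\cO'\neq\varnothing$ the final contradiction is unchanged. Your remaining steps---tightness via Lemma~\ref{rem:1511}\eqref{r02} and properness of $\pi$, the common continuity partition $\cP$ for the countable family $\{\mu\}\cup\{\mu_n\}$, the passage to the limit in $\frac{1}{q}H_{\mu_n}(\cP^{(q)}|X)$ followed by $q\to\infty$ and $i_0\to\infty$, and the appeal to Proposition~\ref{maxinv}---coincide with the paper's proof and are correct.
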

\begin{proof}[Proof of Theorem~\ref{thm:main}]
Using \eqref{eq:inclusion3} and \eqref{eq:lower dimensional}, the theorem follows at once from Theorem~\ref{mainthm}, because $\E{r}$ is increasing in $r$.
Note that as opposed to $\dimm$, the Hausdorff dimension of a countable union of sets is bounded above by any bound on the individual 
dimensions, hence our passage to Hausdorff dimension (see also Remark~\ref{rem:dim} below).
\end{proof}

\begin{rem}\label{rem:dim}
The set $\Ybe(x)$ is dense is $\pi^{-1}(x)$, so that $\dimm\Ybe(x)=d$. But its Hausdorff dimension is strictly less than $d$.
\end{rem}

\begin{proof}[Proof of Theorem~\ref{mainthm}]
Fix $\eps>0$.
We argue by contradiction and assume the following:
\eqlabel{eq:dimbound1}{
\forall m\geq 1,\,\exists r_m>0, \exists x_m\in\cH(\eta_i):\  \dimm\Ybm(x_m) \ge d-\frac{1}{m}.
}
Fix a smaller open set $\cO'$ whose closure is in $\cO$. Set $u = \dim U^+$.

\noindent\tb{Claim 1.}  \textit{ For any large enough $m$, there exists a grid $y_m\in \pi^{-1}(x_m)$ such that 
the injective unstable leaf $W_{y_m}^+$ satisfies $\dimm ( W_{y_m}^+\cap \Ybmprime)\ge u - \frac{1}{m}.$}
Recall that $U^-= U \cap G^- = U \cap \{ g\in G\ |\ a_tga_t^{-1}\to e\ \mbox{as}\ t\to\infty\}$.
For any grid $y\in Y$, any $v\in U^-$ of norm $\le1$ and  $t>0$, 
the two grids $a_{t}y$ and $a_{t}vy$ differ by a translation in the direction of $U^-$ which is of norm $\le e^{-\al t}$, 
where $\al = \min\set{\av{c_i}: i\in J^- }>0$. We deduce that if $a_t y\cap \cO=\varnothing$ and $t$ is large enough, then $a_tvy\cap \cO'=\varnothing$. 
In particular,
for all $m$ large enough and all $v\in U^-$ of norm $\le 1$, we have $v \Ybm \subset \Ybmprime$. 
Therefore,
$$\dimm ( W_{y_m}^+\cap \Ybmprime) + \dim U^- \ge d - \frac{1}{m}.$$
Since $c_i\ne 0$ for all $i$ 
, $d-\dim U^- = u$ and Claim 1 follows.

Let $S_m = W_{y_m}^+\cap \Ybmprime$ be as in Claim 1. The first part of Proposition~\ref{largeentropy} yields an $a$-invariant probability measure $\mu_m$ with
\begin{enumerate}
\item\label{compact proj} $\pi_*\mu_m\in \crly{P}(X,(\eta_i))$.
\item\label{nice to have} $\Supp\mu_m \subset E^{\cO'}_\bR$ (by Proposition~\ref{largeentropy} and Lemma~\ref{lem:gaining invariance})
\item $h_{\mu_m}(a|X) \geq h_a-\frac{1}{m} $.
\end{enumerate}
The fact that $\pi:Y\to X$ is proper together with Lemma~\ref{rem:1511}\eqref{r02} which says that $\crly{P}(X,(\eta_i))$ is compact we deduce 
that after taking a subsequence of $\mu_m$ we may assume it converges  
to some $a$-invariant probability measure $\mu$ such that $\pi_*\mu\in \crly{P}(X,(\eta_i))$. 
Since $\Supp\mu_m \subset E^{\cO'}_\bR$ and $E^{\cO'}_\bR$ is closed, 
we conclude that $\mu$ 
is supported in $E^{\cO'}_\bR$. 
Our next goal is to show the following.

\noindent\tb{Claim 2}. \textit{The measure $\mu$ is $U$-invariant}.

To prove Claim 2 we apply the second part of Proposition~\ref{largeentropy} to the measures $\mu_m$ simultaneously in the following manner.
Fix $i_0$ and let $\eta = \eta_{i_0}$. Let $\cP$ be a finite partition of $Y$ 
satisfying:
\begin{itemize}
\item $\cP$ contains a single unbounded atom $P_\infty$ and it is 
of the form $\pi^{-1}(P_\infty^0)$, where $\psi_{i_0}|_{P_\infty^0}\equiv 0$ ($\psi_i$ is as in Definition~\ref{def:heavy lattice}).
\item  $\forall P\in\cP\smallsetminus\{P_\infty\},\ \diam P<r$, with $r\in(0,\frac{1}{2})$ such that 
any $d_a$-ball of radius $3r$ has Euclidean diameter smaller than the fiber injectivity radius on $Y\smallsetminus P_\infty$,
\item $\forall P\in\cP,\ \nu(\partial P)=0$ for $\nu\in\set{\mu,\mu_m:m\in\bN}$.
\end{itemize}
A similar partition was built in the proof of Proposition~\ref{largeentropy}. The only difference here is that in the third bullet
here we demand that the boundaries of the atoms of $\cP$ will be simultaneous null sets for more than one measure. Since
we are only requiring this for a countable collection of measures this is easily achieved.

From the second part of Proposition~\ref{largeentropy} we deduce that 
for any $m$, for all $q\geq 1$,
\begin{equation}\label{finite}
\frac{1}{q}H_{\mu_m}(\cP^{(q)}|X) \geq h_a-\frac{1}{m}-D\eta_{i_0}.
\end{equation}
Since the boundary of the atoms of $\cP^{(q)}$ are $\mu$-null, we can pass to the limit as $m\to\infty$ in ~\eqref{finite}
and deduce that for any $q$, $\frac{1}{q}H_{\mu}(\cP^{(q)}|X) \geq h_a-D\eta_{i_0}.$ Taking $q\to\infty$ and $i_0\to\infty$ we deduce that 
$$h_\mu(a|X)\ge \lim \frac{1}{q}H_\mu(\cP^{(q)}|X) \ge  h_a.$$ 
By Proposition~\ref{maxinv} we deduce that equality holds and that $\mu$ is $U$-invariant, as claimed.

We arrive at the desired contradiction because $\mu$ is supported in $E^{\cO'}_\bR$,
which cannot contain a full fiber: 
given a lattice $x$, the grids of $x$ which contain points in $\cO'$ cannot be in $E^{\cO'}_\bR$.
\end{proof}

\section{Diophantine approximation}
\label{sec:diophantine}

In this section we prove Theorem~\ref{thm:app}, which, in fact, will follow from a sharper result in the spirit of Theorem~\ref{mainthm}.
We also reformulate and generalize the result in terms of approximation of affine subspaces of $\R^n$ by integer points.

\subsection{Inhomogeneous Diophantine approximation of vectors in $\R^n$}
Fix a dimension $n\ge 1$ and let $d \defn n+1$. For clarity of exposition, we start 
with the diagonal flow $a_t=\diag{e^t,\dots,e^t,e^{-nt}}$. 
Recall that given a vector $v\in\R^n$, we let
\[ \mb{Bad}^\eps(v) = \{ w\in\R^n\ :\ 
\liminf_{k\to\infty}k^{1/n}\langle kv-w\rangle \geq \eps\}.\]
Given a vector $v\in \bR^n$ we let $x_v\defn \smallmat{I_n&v\\0&1}x_0\in X$, where $x_0$ denotes the identity coset, which represents the standard lattice $\bZ^d$. 
The diophantine properties of the vector $v$ are usually captured by the dynamics of the lattice $x_v$. For example, singularity
of $v$ is equivalent to the divergence of the orbit $(a_tx_v)_{t>0}$. In analogy with Definition~\ref{def:heavy lattice} we make the following.

\begin{defi}\label{def:heavy vector}
A vector $v\in \bR^n$ is said to be \emph{heavy} if the lattice $x_v$ is heavy according to Definition~\ref{def:heavy lattice}.
\end{defi}
A nice exercise
is the following characterization of heaviness  of a number $\al\in\bR$ in terms of the continued fraction expansion of $\al$.
\begin{xca}
Show that a number $\al=[a_0;a_1,a_2,\dots]$ is
heavy if and only if
$$\forall \del>0\; \exists \eps>0 \textrm{ such that }\liminf_{N\to\infty}\frac{1}{N}\sum_{k=1}^N\max\set{\log \eps a_k, 0} \le \del.$$
\end{xca}

We prove the following result, which will easily imply Theorem~\ref{thm:app}.
\begin{thm}[Heavy vectors have few badly approximable points]
\label{thm:appug}
If $v\in\bR^n$ is heavy then for any $\eps>0$, $\dim_H(\mb{Bad}^\eps(v))< n$.
In fact, if $\eta_i\to0$ is a sequence of non-negative numbers, then for any $\eps>0$ there exists $\del = \del(\eps, (\eta_i))>0$ such that for any $v\in \bR^n$ for 
which $x_v\in \cH(\eta_i)$,
$\dim_H(\mb{Bad}^\eps(v))\le n-\del$.
\end{thm}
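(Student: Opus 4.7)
The plan is to reduce Theorem~\ref{thm:appug} to Theorem~\ref{thm:main} via an inhomogeneous Dani-type correspondence, combined with a thickening in the contracting horospherical direction $U^-=\{0\}\times\bR$. For $w\in\bR^n$ and $s\in\bR$, introduce the grid
\[
y_{v,w,s}\defn x_v+(-w,s)\in\pi^{-1}(x_v),
\]
whose points are $\{(m+kv-w,k+s):(m,k)\in\bZ^n\times\bZ\}$. Fix $\eps'\in(0,\eps)$ and let $\cO\defn B^n_{\eps'}(0)\times(1/2,1)\subset\bR^d$, a bounded open box. A direct calculation with $a_t=\diag{e^t,\dots,e^t,e^{-nt}}$ shows that $(m+kv-w,k)\in a_t^{-1}\cO$ iff $|m+kv-w|<\eps' e^{-t}$ and $k\in(e^{nt}/2,e^{nt})$. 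Choosing the best $m$ for each $k$ translates this into $\langle kv-w\rangle<\eps' k^{-1/n}$, so $y_{v,w,0}\cap S^+(\cO)$ is finite iff only finitely many $k\in\bZ_+$ satisfy this inequality. Since $w\in\mb{Bad}^\eps(v)$ forces $\liminf k^{1/n}\langle kv-w\rangle\ge\eps>\eps'$, finiteness holds.

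The map $w\mapsto y_{v,w,0}$ alone lands in an $n$-dimensional sub-torus of the $d$-torus $\pi^{-1}(x_v)$, so applying Theorem~\ref{thm:main} at this point only gives the trivial bound $\dim_H\mb{Bad}^\eps(v)\le n$. To improve this by a full codimension, I thicken along $U^-$: notice that $a_ty_{v,w,s}=a_ty_{v,w,0}+(0,e^{-nt}s)$, so the shift is contracted by the forward flow. Picking an open box $\cO'\subset\cO$ whose closure lies in $\cO$, one verifies that for every $s\in[0,1/2]$ and every sufficiently large $t$ (depending on $s$), $a_ty_{v,w,s}\cap\cO'\ne\varnothing$ forces $a_ty_{v,w,0}\cap\cO\ne\varnothing$. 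Combined with the previous step, only finitely many points of $y_{v,w,s}$ lie in $S^+(\cO')$.

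The map $\Phi:\mb{Bad}^\eps(v)\times[0,1/2]\to\{y\in\pi^{-1}(x_v):y\cap S^+(\cO')\text{ is finite}\}$, $(w,s)\mapsto y_{v,w,s}$, is locally bi-Lipschitz with constants independent of $v$; the only possible obstruction to its bi-Lipschitz-ness is the $\bZ^n$-periodicity in $w$, which disappears after restricting to a fundamental domain of $\bZ^n$, and this restriction does not change $\dim_H\mb{Bad}^\eps(v)$ by periodicity. Indeed, $(-w,s)\equiv(-w',s')\pmod{x_v}$ forces $s-s'\in\bZ$ and hence $s=s'$ on $[0,1/2]$, and then $w\equiv w'\pmod{\bZ^n}$. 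By Marstrand's product inequality for Hausdorff dimension, $\dim_H(\mb{Bad}^\eps(v)\times[0,1/2])\ge\dim_H\mb{Bad}^\eps(v)+1$, and Theorem~\ref{thm:main}, applied to the bounded open set $\cO'$ and the sequence $(\eta_i)$, yields $\del=\del(\cO',(\eta_i))>0$ with $\dim_H\{y\in\pi^{-1}(x_v):y\cap S^+(\cO')\text{ is finite}\}\le d-\del$ uniformly over $x_v\in\cH(\eta_i)$. Combining gives $\dim_H\mb{Bad}^\eps(v)\le d-1-\del=n-\del$, as required.

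The main obstacle I expect is the uniformity of $\del$ in $v$: the constant must depend only on $\cO'$ (hence on $\eps$) and on $(\eta_i)$, which is precisely the content of the uniform second part of Theorem~\ref{thm:main}. The Dani correspondence, the choice of nested boxes $\cO'\subset\cO$, and the thickening along $U^-$ are standard, though they require careful bookkeeping to guarantee that the shrunk box $\cO'$ is still strong enough for Theorem~\ref{thm:main} to bite while weak enough that the finiteness of $y_{v,w,s}\cap S^+(\cO')$ is inherited from the $s=0$ case.
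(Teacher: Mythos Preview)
Your proposal is correct and follows essentially the same approach as the paper: embed $\mb{Bad}^\eps(v)$ into the fiber $\pi^{-1}(x_v)$ via the Dani correspondence, thicken in the $U^-$-direction to gain a full extra dimension, and invoke the uniform bound of Theorem~\ref{thm:main}. The only cosmetic difference is that the paper carries out the thickening in one stroke with a single open set $\cO=\{w_s:s\in(0,1),\ \|w\|<\eps/2\}$, verifying directly that $x_v-w_s\in F_{S^+(\cO)}(x_v)$ for all $s\in[0,1]$; you instead split this into a two-step nested-box argument $\cO'\subset\cO$ and then appeal explicitly to Marstrand's product inequality, which the paper leaves implicit.
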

\begin{proof}
We write vectors in $\bR^d=\bR^n\times \bR$ as $w_s\defn \smallmat{w\\ s}$ with $w\in\bR^n, s\in\bR$.
Let $(\eta_i)$ be as in the statement and let $v\in\bR^n$ be such that $x_v\in\cH(\eta_i)$. 
Let $\eps>0$ and let $\cO \defn \set{w_s\in\bR^d: s\in(0,1), \norm{w}<\frac{\eps}{2}}$. 
Note that 
\begin{equation}\label{eq:spike2}
S^+(\cO) = \set{ w_s: s\ge 1, s^{1/n}\norm{w}<\frac{\eps}{2}} \cup \cO.
\end{equation}
We know by Theorem~\ref{thm:main} that there exists $\del = \del(\eps,(\eta_i))>0$ such that $\dim_H F_{S^+(\cO)}(x_v)\le d-\del$.
We will show that for any $w\in\mb{Bad}^\eps(v)$ and for any $s\in[0,1]$, the grid $x_v-w_s$ belongs to $F_{S^+(\cO)}(x_v)$.
This will finish the proof.

To this end, let $w\in\mb{Bad}^\eps(v)$ and $s\in[0,1]$. Note that 
$$x_v-w_s 
 =  \underset{k \in \Z} {\bigcup} \set{\mat{\vec{m}+kv-w\\ k-s}: \vec{m}\in\bZ^n}.$$
We call $k$ the \emph{layer parameter} of $\set{\mat{\vec{m}+kv-w\\ k-s}: \vec{m}\in\bZ^n}.$  Note that 
the set of vectors in each layer is discrete. 
Therefore if we suppose that $x_v-w_s$ intersects $S^+(\cO)$ in infinitely many points, then we conclude that $S^+(\cO)$ must contain points
in arbitrarily high layers (i.e.\ with $k$ arbitrarily large). In particular, the description
of $S_+(\cO)$ given in~\eqref{eq:spike2} implies that there exist arbitrarily large $k>0$ and vectors $\vec{m}\in \bZ^n$
such that $(k-s)^{1/n}\norm{\vec{m}+kv -w} \le \frac{\eps}{2}$. In particular, $\liminf_{k\to\infty} k^{1/n}\idist{kv-w}<\eps$ and 
so $w\notin \mb{Bad}^\eps(v)$ contradicting our assumption. We deduce that $\#\set{x_v-w_s \cap S^+(\cO)} < \infty$, i.e. $x_v-w_s\in F_{S^+(\cO)}(x_v)$ as claimed.
\end{proof}

As a corollary, we now derive Theorem~\ref{thm:app} from the introduction, which we recall here for convenience.

\begin{cor}
For any $\eps>0$, there exists $\del>0$ such that for almost every $v\in \R^n$,
$\dim_H\mb{Bad}^\eps(v) < n-\del$.
\end{cor}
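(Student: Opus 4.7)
The plan is to reduce the corollary to Theorem~\ref{thm:appug} by producing a single sequence $(\eta_i)$ of non-negative reals tending to zero for which $x_v\in\cH(\eta_i)$ holds for Lebesgue-almost every $v\in\R^n$; once such a sequence is in hand, Theorem~\ref{thm:appug} immediately supplies a single $\del=\del(\eps,(\eta_i))>0$ valid almost everywhere. Since $\del$ in Theorem~\ref{thm:appug} depends only on $\eps$ and $(\eta_i)$, this is exactly what the corollary asserts.

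The construction of such an $(\eta_i)$ will proceed in two steps. First, by Lemma~\ref{rem:1511}(\ref{r01}), I would fix a sequence $(\eta_i)$ with $m_X\in\crly{P}(X,(\eta_i))$; any such choice works and is used for every $v$ simultaneously. Second, I would invoke the classical equidistribution theorem for expanding horospherical orbits: since $a_t$ expands and normalizes the unipotent subgroup $U^+=\{\smallmat{I_n & v\\0 & 1}:v\in\R^n\}$ and acts mixingly on $(X,m_X)$, the forward orbit averages $\del_{x_v}^T$ converge weak-* to $m_X$ for Lebesgue-almost every $v\in\R^n$. This is the standard consequence of mixing of $a_t$ on $X$ together with a wavefront/thickening argument, in the spirit of Kleinbock--Margulis. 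For any such $v$ one then has $m_X\in\{\del_{x_v}^T\}^{\om}\cap\crly{P}(X,(\eta_i))$, so by definition $x_v\in\cH(\eta_i)$, and Theorem~\ref{thm:appug} completes the argument.

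The hard part is exactly the almost-sure equidistribution in the second step: it is the only ingredient not already contained in this paper. Everything else is formal bookkeeping through Lemma~\ref{rem:1511}, the definition of $\cH(\eta_i)$, and the uniform version Theorem~\ref{thm:appug}. Note that this mirrors the proof of Corollary~\ref{cor:main}, where one uses ergodicity of $a$ on $X$ to conclude $m_X$-almost every lattice is heavy; here the input is strengthened to equidistribution for the Lebesgue-almost every point on the horospherical section $v\mapsto x_v$, which is precisely the classical horospherical equidistribution result.
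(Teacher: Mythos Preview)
Your proposal is correct and matches the paper's own proof essentially verbatim: fix $(\eta_i)$ with $m_X\in\crly{P}(X,(\eta_i))$ via Lemma~\ref{rem:1511}\eqref{r01}, invoke the well-known fact that $\del_{x_v}^T\wstar m_X$ for Lebesgue-almost every $v$, conclude $x_v\in\cH(\eta_i)$, and apply Theorem~\ref{thm:appug}. The paper simply cites the equidistribution input as ``well known'' rather than attributing it to mixing/Kleinbock--Margulis, but the argument is the same.
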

\begin{proof}
The proof is similar to the proof of Corollary~\ref{cor:main}. It is well known that 
$$\Om = \set{v\in\bR^n : \del_{x_v}^T\wstar m_X}$$ 
has full Lebesgue measure.  Using Lemma~\ref{rem:1511}\eqref{r01} we choose 
$(\eta_i)$ so that $m_X\in\crly{P}(X,(\eta_i))$. We then have by definition that 
$\set{x_v: v\in \Om}\subset \cH(\eta_i)$. The theorem thus follows from Theorem~\ref{thm:appug}.
\end{proof}

Theorem~\ref{thm:appug} can be generalized in several ways, using Theorem~\ref{mainthm} for more general flows $(a_t)$.
For example, if $(i_1,\dots,i_n)$ is an $n$-tuple of real numbers such that
\[ \forall \ell,\, i_\ell\in(0,1)
\quad\mbox{and}\quad
\sum_{\ell=1}^ni_\ell=1,\]
we can define, for any vector $v=
\left(\begin{matrix}v_1\\ \vdots\\ v_n\end{matrix}\right)\in\R^n$,
\[\Bad^\eps_{(i_1,\dots,i_n)}(v)
\defn \{ w=\,\left(\begin{matrix}w_1\\ \vdots\\ w_n\end{matrix}\right)\in\R^n\ :\ \forall \ell,\, \liminf_{k\to\infty} k^{i_\ell}\langle kv_\ell-w_\ell\rangle\geq\eps\},\]
and
\[\Bad_{(i_1,\dots,i_n)}(v)=\bigcup_{\eps>0}\Bad^\eps_{(i_1,\dots,i_n)}(v).\]
It is known (see for example~\cite{PV, KTV, KWAdvances}) that for any $v\in\R^n$, $\dim_H\Bad_{(i_1,\dots,i_n)}(v)=n$. 
Theorem~\ref{mainthm} applied with the flow $a_t=\diag{e^{i_1t},\dots,e^{i_nt},e^{-t}}$ yields the following.

\begin{thm}[Heavy vectors for weighted approximation]
\label{thm:app2}
Let $v\in\R^n$ be heavy for $a_t=\diag{e^{i_1t},\dots,e^{i_nt},e^{-t}}$.
Then, for all $\eps>0$,
\[ \dim_H\Bad^\eps_{(i_1,\dots,i_n)}(v)<n.\]
Moreover, for any $\eps>0$, there exists $\del>0$ such that for a.e. $v\in\R^n$,
\[ \dim_H\Bad^\eps_{(i_1,\dots,i_n)}(v)<n-\del.\]
\end{thm}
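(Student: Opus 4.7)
The plan is to follow the template of Theorem~\ref{thm:appug} and Corollary~\ref{cor:main}, applying Theorem~\ref{mainthm} to the weighted flow $a_t = \diag{e^{i_1 t}, \dots, e^{i_n t}, e^{-t}}$. This flow satisfies the standing hypotheses of the paper: its exponents are nonzero and sum to zero; the normalization $\max c_i = 1$ required by~\eqref{cnormalize} is a matter of rescaling time, which affects neither the notion of heaviness of $v$ nor the sets $\Bad^\eps_{(i_1,\dots,i_n)}(v)$. Fix $\eta_i \to 0$, let $v \in \R^n$ satisfy $x_v \in \cH(\eta_i)$, and let $\eps > 0$. Define the bounded open set
\[ \cO \defn \set{w_s \in \R^{n+1} : s \in (0, 1),\ |w_\ell| < \tfrac{\eps}{2}\ \text{for all}\ \ell = 1, \dots, n}, \]
where $w_s = \smallmat{w \\ s}$ with $w \in \R^n$, $s \in \R$. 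Since $a_t^{-1}$ scales the $\ell$-th coordinate by $e^{-i_\ell t}$ and the last coordinate by $e^t$, an elementary calculation generalizing~\eqref{eq:spike2} yields
\[ S^+(\cO) = \cO \cup \set{w_s : s \ge 1,\ s^{i_\ell}|w_\ell| < \tfrac{\eps}{2}\ \text{for all}\ \ell}. \]

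The key step is to show that for every $w \in \Bad^\eps_{(i_1,\dots,i_n)}(v)$ and every $s \in [0, 1]$, the grid $x_v - w_s$ lies in $F_{S^+(\cO)}(x_v)$. Decomposing $x_v - w_s$ into layers indexed by $k \in \Z$,
\[ x_v - w_s = \bigcup_{k \in \Z} \set{\smallmat{m + k v - w \\ k - s} : m \in \Z^n}, \]
each layer is discrete; thus an infinite intersection with the bounded set $S^+(\cO)$ would force arbitrarily large $k$ with integer vectors $m \in \Z^n$ satisfying $(k - s)^{i_\ell}|m_\ell + k v_\ell - w_\ell| < \eps/2$ for every $\ell$. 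Since $(k/(k-s))^{i_\ell} \to 1$ as $k \to \infty$, this yields $\liminf_k k^{i_\ell}\idist{k v_\ell - w_\ell} \le \eps/2 < \eps$ for every $\ell$, contradicting $w \in \Bad^\eps_{(i_1,\dots,i_n)}(v)$. Theorem~\ref{mainthm} then supplies $\del = \del(\eps, (\eta_i)) > 0$ with $\dim_H F_{S^+(\cO)}(x_v) \le n+1-\del$. The map $(w, s) \mapsto x_v - w_s$ is a local isometry from $\R^n \times [0, 1]$ into the torus $\pi^{-1}(x_v)$, and $\dim_H(A \times [0, 1]) \ge \dim_H A + 1$ for Borel $A$, so we conclude $\dim_H \Bad^\eps_{(i_1,\dots,i_n)}(v) \le n - \del$, which is the first assertion.

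For the almost-every statement I argue exactly as in Corollary~\ref{cor:main}: the parameterization $v \mapsto x_v$ traces out an $n$-dimensional piece of the unstable horospherical leaf through $x_0$ for $a_t$, and the standard equidistribution theorem for this leaf (a consequence of mixing of $a_1$ on $(X, m_X)$ combined with a Fubini-Birkhoff argument) gives $\del^T_{x_v} \wstar m_X$ for Lebesgue-a.e.\ $v \in \R^n$. Choosing $(\eta_i)$ with $m_X \in \crly{P}(X, (\eta_i))$, we get $x_v \in \cH(\eta_i)$ for a.e.\ $v$, and apply the first part of the theorem with a uniform $\del$. The only genuinely new ingredient beyond the unweighted proof is the explicit description of $S^+(\cO)$ for the weighted flow; once this is in hand, the translation between weighted badness and non-membership in $F_{S^+(\cO)}(x_v)$ is essentially automatic, so there is no serious technical obstacle.
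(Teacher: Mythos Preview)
Your proposal is correct and is precisely the argument the paper intends: it explicitly leaves the proof to the reader as ``very similar to that of Theorem~\ref{thm:appug}'', and you have carried out that template with the appropriate weighted spike $S^+(\cO)$ and layer decomposition. The only cosmetic point is that the Hausdorff bound $\dim_H F_{S^+(\cO)}(x_v)\le d-\del$ is the content of Theorem~\ref{thm:main} rather than Theorem~\ref{mainthm} (the latter bounds $\dimm E^\cO_{[r,\infty)}$), so cite accordingly.
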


Being very similar to that of Theorem~\ref{thm:appug}, the proof of Theorem~\ref{thm:app2} is left to the reader.
\subsection{Approximation of affine subspaces}

Another natural generalization of Theorem~\ref{thm:appug} is obtained by replacing the vector $v$ by a matrix. We choose to present this generalization in a \emph{projective} manner which is not common but we find it very natural. That is, in the context of Diophantine approximation of affine subspaces of $\R^d$ by points in $\Z^d$. The case of Theorem~\ref{thm:appug} corresponding to the subspace being a line (see Remark~\ref{rem:1807}).

Let $\Grass(\ell,d)$ be the Grassmannian of $\ell$-dimensional linear subspaces of $\R^d$.
Recall that by Minkowski's first theorem on convex bodies, for every $W_0\in\Grass(\ell,d)$, the inequality
\[ d(\bbk,W_0) \leq 2^d\cdot \|\bbk \|^{\frac{-\ell}{d-\ell}}\]
has infinitely many solutions $\bbk \in\Z^d$, where $\|\bbk \|$ denotes the Euclidean norm of $\bbk$.
It is therefore natural to say that an affine subspace $W$ of dimension $\ell$ in $\R^d$ is \emph{$\eps$-badly approximable} if it satisfies
\[ \liminf_{\substack{\bbk \to\infty\\\bbk \in\Z^d}} \|\bbk \|^{\frac{\ell}{d-\ell}}d(\bbk,W)\geq\eps.\]

Let $\Grass_A(\ell,d)$ denote the Grassmannian of $\ell$-dimensional affine subspaces of $\R^d$ and $\pi:\Grass_A(\ell,d)\to\Grass(\ell,d)$ the natural projection, mapping an affine subspace to its linear part.
For a linear subspace  $W_0\in\Grass(\ell,d)$ of $\R^d$, we want to study the set
\[ \Badg^\eps(W_0) \defn \{ W\in \pi^{-1}(W_0)\ |\ \liminf_{\substack{\bbk \to\infty\\\bbk\in\Z^d}} \|\bbk\|^{\frac{\ell}{d-\ell}}d(\bbk,W)\geq\eps\}\]
of $\eps$-badly approximable affine subspaces $W\leq\R^d$ with linear part $W_0$.
It is known that $\dim_H (\Badg(W_0))=d-\ell$, where $\Badg(W_0))\defn \bigcup_{\eps>0}\Badg^\eps(W_0)$. See \cite{ET}.

\begin{rem}\label{rem:1807}
Let $n=d-1$.
For $v\in\R^n$, consider the line $W_0\in\R^d$ spanned by the vector $\tilde{v}=\left(\begin{matrix}1\\ v\end{matrix}\right)\in\R^d$.
Then a vector $w\in\R^n$ is in $\Bad(v)$ if and only if $\tilde{w}+W_0$ is a badly approximable line in $\R^d$, so that the setting of the previous subsection corresponds to Diophantine approximation of lines in $\R^d$.
\end{rem}

\begin{thm}[Approximation of affine subspaces]
\label{thm:app3}
For all $\eps>0$, there exists $\delta>0$ such that for almost every $W_0\in\Grass(\ell,d)$,
\[ \dim_H\Badg^\eps(W_0) \leq d-\ell-\del.\]
\end{thm}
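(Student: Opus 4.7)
The plan is to adapt the strategy of Theorem~\ref{thm:appug}: I would parameterize a full-measure subset of $\Grass(\ell,d)$ by matrices, translate the condition of being $\eps$-badly approximable into that of being an $\cO$-avoiding grid over a suitable lattice, and then apply Theorem~\ref{thm:main}.

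Concretely, I fix the diagonal flow $a_t=\diag{e^{t}I_{d-\ell},\,e^{-(d-\ell)t/\ell}I_\ell}$, which satisfies the normalization $\max c_i=1$, $\sum c_i=0$, $J^+=\{1,\dots,d-\ell\}$ and $\dim U^+=d-\ell$. A full-measure subset of $\Grass(\ell,d)$ is parameterized by $(d-\ell)\times\ell$ matrices $A$, via the graph $W_0(A)\defn\{\smallmat{Ap\\ p}:p\in\R^\ell\}$. To each such $A$ I associate the lattice $x_A\defn u_A\bZ^d$, where $u_A\defn\smallmat{I_{d-\ell}&-A\\ 0&I_\ell}$ (the sign is chosen so that the correspondence below is clean), and to each pair $(w,s)\in\R^{d-\ell}\times\R^\ell$ the grid $y_{A,w,s}\defn x_A-\smallmat{w\\ s}\in\pi^{-1}(x_A)$. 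A direct computation, analogous to the one in the proof of Theorem~\ref{thm:appug}, shows that if $W=W_0(A)+\smallmat{w\\ 0}$ is $\eps$-badly approximable then for every $s\in[0,1]^\ell$ the grid $y_{A,w,s}$ lies in $F_{S^+(\cO)}(x_A)$, where $\cO\defn\{\smallmat{x\\ y}\in\R^{d-\ell}\times\R^\ell:\|x\|<\eps',\,\|y\|<1\}$ for some $\eps'>0$ uniform on compact sets of parameters $A$.

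Next, I would invoke the classical fact that for Lebesgue almost every $A$ the Cesaro averages $\del_{x_A}^T$ converge weak-$*$ to the Haar probability measure $m_X$ on $X$; this equidistribution follows from the mixing of the $a_t$-action, via arguments of Kleinbock-Margulis type on translates of unipotent orbits. By Lemma~\ref{rem:1511}(1), $m_X\in\crly{P}(X,(\eta_i))$ for a single sequence $\eta_i\to 0$, and hence $x_A\in\cH(\eta_i)$ for a.e.\ $A$ with $(\eta_i)$ independent of $A$. Theorem~\ref{thm:main} then yields $\delta=\delta(\eps,(\eta_i))>0$ such that $\dim_H F_{S^+(\cO)}(x_A)\le d-\delta$ for almost every $A$. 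On the other hand, the tube
\[ T\defn\{y_{A,w,s}:s\in[0,1]^\ell,\,w\in\Badg^\eps(W_0(A))\} \]
is contained in $F_{S^+(\cO)}(x_A)$, and since $(w,s)\mapsto y_{A,w,s}$ is bi-Lipschitz onto its image, the product inequality for Hausdorff dimension gives $\dim_H T\ge \ell+\dim_H\Badg^\eps(W_0)$. Combining the two bounds yields $\dim_H\Badg^\eps(W_0)\le d-\ell-\delta$ for almost every $W_0$, as required.

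The main obstacle is the equidistribution statement $\del_{x_A}^T\wstar m_X$ for a.e.\ $A$ together with the uniformity of the sequence $(\eta_i)$ in $A$; these are well-established in the homogeneous dynamics literature but require careful citation. The remaining ingredients — the explicit translation between the Diophantine condition and the grid-avoidance condition, and the use of the auxiliary parameter $s\in[0,1]^\ell$ to absorb the $\ell$ "extra" line-direction dimensions of the full fiber $\pi^{-1}(x_A)$ relative to the $(d-\ell)$-dimensional fiber $\pi^{-1}(W_0)\subset\Grass_A(\ell,d)$ — are straightforward generalizations of the corresponding steps in the proof of Theorem~\ref{thm:appug}.
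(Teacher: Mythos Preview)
Your approach is essentially the paper's: translate the Diophantine condition on $W$ into a grid-avoidance condition over the lattice attached to $W_0$, place almost every such lattice in a single $\cH(\eta_i)$ via equidistribution towards $m_X$, and apply Theorem~\ref{thm:main}. Your explicit matrix parameterization and the use of the auxiliary parameter $s\in[0,1]^\ell$ to absorb the $\ell$ extra torus directions exactly mirror the proof of Theorem~\ref{thm:appug}; the paper packages the same step more tersely, working with an arbitrary $g_{W_0}\in\SL_d(\R)$ sending $W_0$ to a coordinate $\ell$-plane and noting that the grid-avoidance conclusion holds \emph{independently of the choice of $g_W$}, which is precisely your tube trick.

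One point needs a little more care. You observe that $\eps'$ is only uniform on compact sets of parameters $A$; consequently $\cO$, and hence the $\delta$ supplied by Theorem~\ref{thm:main}, a priori depends on that compact set, so as written you have not yet produced a single $\delta$ valid for almost every $W_0$. The paper sidesteps this by taking a fixed $\cO=B_{\eps/2}$; this works because one may choose $g_{W_0}\in\SO(d)$, so that $d(\bbk,W)=\|(g_W\bbk)_{\perp}\|$ with no $W_0$-dependent distortion. In your parameterization the shear $u_A$ is not an isometry, which is exactly where the dependence on $A$ enters. An easy fix is to cover the compact Grassmannian by finitely many charts with compact parameter domains and take the minimum of the finitely many $\delta$'s; alternatively, replace $u_A$ by a rotation and use a single $\cO$ independent of $W_0$.
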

\begin{proof}
Since the proof is very similar to that of Theorem~\ref{thm:appug}, we keep it terse.
We apply Theorem~\ref{mainthm} with flow
\[ a_t=\diag{e^t,\dots,e^t,e^{-\frac{\ell t}{d-\ell}},\dots,e^{-\frac{\ell t}{d-\ell}}}.\]
Let $W_0\in\Grass(\ell,d)$, and choose $g_{W_0}\in G_0=\SL_d(\R)$ such that $g_{W_0}\cdot W_0=\Span(e_1,\dots,e_\ell)$.
For almost every $W_0$, the orbit $(a_tg_{W_0}\Z^d)_{t>0}$ equidistributes in $X$ (note that this property does not depend on our choice of $g_{W_0}$).
Taking $\cO=B_{\frac{\eps}{2}}$ to be the open ball of radius $\eps/2$ in $\R^d$, Theorem~\ref{mainthm} shows that there exists $\delta>0$ such that for almost every $W_0$,
\[ \dim_H F_{S^+(B_{\frac{\eps}{2}})}(g_{W_0}\Z^d) \leq d-\delta.\]
Assume now that $W$ is an affine subspace with linear part $W_0$, and choose $g_W\in G=\ASL_d(\R)$ such that $g_W\cdot W=\Span(e_1,\dots,e_{\ell})$ (as an affine subspace).
It is a simple computation to check that if $W\in\Badg^{\eps}(W_0)$, then the grid $g_W\Z^d$ lies in $F_{S^+(B_{\frac{\eps}{2}})}(g_{W_0}\Z^d)$, independently of our choice of $g_W$.
The above bound on the Hausdorff dimension of $F_{S^+(B_{\frac{\eps}{2}})}(g_{W_0}\Z^d)$ therefore implies
\[ \dim_H\Badg^\eps(W_0) \leq d-\ell-\del.\]
\end{proof}

\section{Examples}
\label{sec:ex}

In this section, to justify the necessity of some non-escape-of-mass assumption on $x$ in Theorem~\ref{mainthm}, we construct non-singular lattices in $\R^2$ with lots of bad grids: $x$ with non-divergent orbits but for which there exists $\eps>0$ such that $\Ybe(x)$ has full Hausdorff dimension in $\pi^{-1}(x)$ for a suitable choice of $\cO$.

\begin{prop}[Lattices with lots of bad grids]
\label{example}
There exists a non-singular unimodular lattice $x$ and an open bounded set $\cO$ such that
\[ \dim_H \Ybe(x) = 2.\] 
\end{prop}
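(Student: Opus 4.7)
The plan is to construct a non-singular $x$ whose $a_t$-orbit performs extremely deep but still recurrent cusp excursions, and then exhibit a Cantor-type subset of $\pi^{-1}(x)$ of Hausdorff dimension two all of whose elements eventually avoid $\cO$ under the forward flow. I would take $x = u_\alpha \Z^2$ with $u_\alpha = \smallmat{1 & \alpha \\ 0 & 1}$, where $\alpha = [0; a_1, a_2, \ldots]$ is an irrational number with extremely rapidly growing (lacunary) partial quotients, and choose $\cO = B_\rho(0)$ a small Euclidean ball, so that $S^+(\cO)$ is a thin horn around the $e_2$-axis. Since $\alpha$ is irrational, the orbit $(a_t x)_{t > 0}$ is non-divergent and $x$ is non-singular. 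By the classical Dani correspondence for $\SL_2$, at the times $\tau_k \sim \tfrac12 \log(q_k q_{k+1})$ (where $p_k/q_k$ is the $k$-th convergent) the lattice $a_{\tau_k} x$ carries a short vector of length $\varepsilon_k \sim (q_k/q_{k+1})^{1/2}$; when $a_{k+1}$ is enormous, $\varepsilon_k$ is tiny and the corresponding cusp excursion lasts about $\log a_{k+1}$ units of time.

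The geometric core of the construction is the following estimate. For $t$ in the $k$-th cusp excursion, the lattice $a_t x$ degenerates to a union of parallel lines in the direction of its short vector, with perpendicular spacing of order $1/\varepsilon_t$; hence the forbidden set $B_t \defn \{ w \in \R^2/x : a_t(x+w)\cap \cO \ne \varnothing \}$ has, after transport by $a_t$, Lebesgue measure $O(\rho \varepsilon_t)$. Pulling back by $a_t^{-1}$ and taking the union over $t$ inside the $k$-th excursion yields a set $B^{(k)} \subset \R^2/x$ built from a controlled family of thin parallel strips at a dyadic scale determined by $\varepsilon_k$. Inductively removing $B^{(k)}$ produces a nested family $C_0 \supset C_1 \supset \cdots$, and the intersection $C = \bigcap_k C_k$ is contained in $\Ybe(x)$ by construction.

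The hard part is certifying that $\dim_H C = 2$. I would establish this by a Frostman mass-distribution argument: construct a probability measure $\mu$ supported on $C$ satisfying $\mu(B(w, r)) \le r^{2 - o(1)}$ as $r \to 0$. This forces a delicate tuning of the growth rate $(a_k)$: the $a_k$ must be large enough that each $\varepsilon_k$ is minuscule (so that $B^{(k)}$ occupies a negligible proportion of $C_{k-1}$), yet the dimension loss at stage $k$ — roughly of order $\log(1/\varepsilon_k)/\tau_k$ — must remain summable with arbitrarily small total. A super-exponential choice such as $a_{k+1} = \exp(2^{k^2})$ should accommodate both requirements. This balance, reflecting the tension between non-singularity (enough recurrence) and the absence of heaviness (enough escape of mass), is the technical crux; once it is set up, a standard mass-distribution estimate at each scale yields $\dim_H C = 2$ and completes the proof.
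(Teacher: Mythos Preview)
Your overall plan is the same as the paper's: take $x=u_\alpha\Z^2$ with $\alpha$ irrational of unbounded partial quotients, remove at each cusp excursion a ``forbidden'' set of grids, and show the residual Cantor set has full dimension via a mass-distribution argument. But you miss two simplifications that make the paper's version short and clean, and your sketch of the dimension estimate is not quite the right bookkeeping.

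First, the paper reduces to a \emph{one-dimensional} problem. Since the second coordinate is the stable direction for $a_t$, translating a grid by $\smallmat{0\\s}$ does not affect whether the forward orbit eventually avoids $\cO$. So it suffices to produce a set $B\subset\R$ of horizontal shifts $\gamma$ with $\dim_H B=1$; then $B\times[-1,1]$ maps into $\Ybe(x)$ and has dimension $2$. Your two-dimensional sets $B^{(k)}=\bigcup_{t\in\text{excursion}}B_t$ are awkward: the pullback $a_t^{-1}(\text{forbidden strips})$ is a union of tall thin boxes of shape $e^{-t}\rho\times e^t\rho$ centred at lattice points, and the union over $t$ in an excursion is not a ``controlled family of thin parallel strips'' in any obvious sense.

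Second, the paper never takes a union over $t$ inside an excursion. The key geometric observation (their displayed claim just before the figure with translates of $\R a_{t_i}v$) is that a single arithmetic condition at the \emph{entry time} $t_i$---namely $d(\gamma,e^{-t_i}\ell_i\Z)>2e^{-t_i}$ for an explicit $\ell_i\in[5,50]$---forces $\sigma(a_t(x+\smallmat{\gamma\\0}))\ge 1$ for \emph{all} $t\in[t_i,s_{i+1}]$. This makes each $B_i\subset\R$ a periodic union of intervals of length $\asymp e^{-t_i}$ and gap $\asymp e^{-t_i}$, so the nested intersection is a self-similar-type Cantor set whose dimension is read off directly.

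Because of these simplifications, the paper needs only $a_k\to\infty$ (equivalently $t_i/i\to\infty$), not super-exponential growth. The Frostman estimate boils down to $\mu(B(\gamma,r))\le r\,C^i$ with $i$ the scale index and $C$ absolute; since $r\asymp e^{-t_i}$ and $i/t_i\to 0$, this gives $\mu(B(\gamma,r))\le r^{1-o(1)}$. Your proposed accounting ``dimension loss $\asymp\log(1/\varepsilon_k)/\tau_k$, summable'' is not the right quantity (for very lacunary $a_k$ that ratio is close to $1$, not small); the actual loss per stage is a bounded multiplicative constant, and what matters is that the number of stages up to scale $r$ is $o(\log(1/r))$.
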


Fix a lattice $x$ in $\R^2$, assume $\lambda_1(x)\geq\frac{1}{10}$, where $\lambda_1(x)$ denotes the shortest non-zero vector in $x$ with respect to the supremum norm, which we denote by $\norm{\cdot}$, and let
$E=\{t>0 \ |\ \lambda_1(a_tx)\geq\frac{1}{10}\}$.
Assume that $E$ can be written as a disjoint union of closed intervals
\[ E = [s_1,t_1]\cup[s_2,t_2]\cup\dots\]
where the reals $s_1,s_2,\dots$ and $t_1,t_2,\dots$ are defined inductively by
$s_1=0$, and, for $i\geq 1$,
\[ t_i = \inf\{t>s_i \ |\ \lambda_1(a_tx)\leq\frac{1}{10}\},
\quad\mbox{and}\quad
s_{i+1} = \inf\{s>t_{i} \ |\ \lambda_1(a_tx)\geq\frac{1}{10}\}.\]

Now, for each $i$, choose a non-zero vector $v=\left(\begin{matrix}v_1\\v_2\end{matrix}\right)$ in $x$ such that $\lambda_1(a_{t_i}x)=\|a_{t_i}v\|$.
One readily checks the following
\begin{enumerate}
\item $\forall t\in(t_i,s_{i+1}),\ \lambda_1(a_tx)=\|a_tv\|$
\item $e^{t_i}|v_1|=e^{-s_{i+1}}|v_2|\leq\frac{1}{10}$ and $e^{-t_i}|v_2|=e^{s_{i+1}}|v_1|=\frac{1}{10}$,
\end{enumerate}
using the fact that in dimension 2, if $\|a_tv\|<1$ (and $v$ is primitive), then $\lambda_1(a_tx)=\|a_tv\|$ (see Figure~\ref{atx}).

\begin{figure}
\begin{tikzpicture}
\draw[->] (-1.2,0) -- (2,0);
\draw[->] (0,-1.2) -- (0,2);
\draw (-1,-1) rectangle (1,1);
\draw[thick] (0.3,1) .. controls (0.4,.5) and (.5,0.4) .. (1,0.3);
\draw (0,1) node[anchor=east, fill=white] {\small{$\frac{1}{10}$}};
\draw (1,0) node[anchor=north, fill=white] {\small{$\frac{1}{10}$}};
\draw (.1,-.15) node[anchor=east] {\small{$0$}};
\filldraw[red] (0.3,1) circle (1pt);
\draw (0.3,1) node[anchor=south,red] {\tiny{$a_{t_i}v$}};
\filldraw[blue] (1,0.3) circle (1pt);
\draw (1,0.3) node[anchor=west,blue] {\tiny{$a_{s_{i+1}}v$}};
\end{tikzpicture}
\caption{$a_t v$, for $t\in (t_i,s_{i+1})$}
\label{atx}
\end{figure}
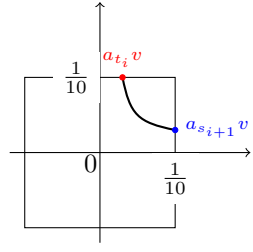

From the fact that $a_{t_i}v$ has norm $\frac{1}{10}$ and makes an angle of at least $\frac{\pi}{4}$ with the first coordinate axis, we see that the translates of the line $\R a_{t_i}v$ by vectors of $a_{t_i}x$ intersect the horizontal axis in a discrete subgroup $\ell_i\Z$, with $\ell_i\in [5,50]$.
Let
\[ B_i = \{ \gamma\in\R \ |\ d(\gamma,e^{-t_i}\ell_i\Z) > 2e^{-t_i}\}.\]
For a grid $y$, we let $\sigma(y) \defn \min\{\|v\| \,;\, v\in y\}$ denote the norm of the shortest vector in $y$.
We claim that
\begin{equation}\label{cla}
\forall\gamma\in B_i,\,\forall t\in[t_i,s_{i+1}],\ 
\sigma(a_t(x+\left(\begin{matrix}\gamma\\ 0\end{matrix}\right)))\geq 1.
\end{equation}
To see this, observe that if $\gamma\in B_i$, then the box
\[ \left(\begin{matrix}e^{t_i}\gamma\\0\end{matrix}\right)+[-1,1]\times [-e^{-2t_i}\frac{v_2}{v_1},e^{-2t_i}\frac{v_2}{v_1}] \]
around $a_{t_i}\left(\begin{matrix}\gamma\\0\end{matrix}\right)$
does not intersect $a_{t_i}x$ (see Figure~\ref{box}).

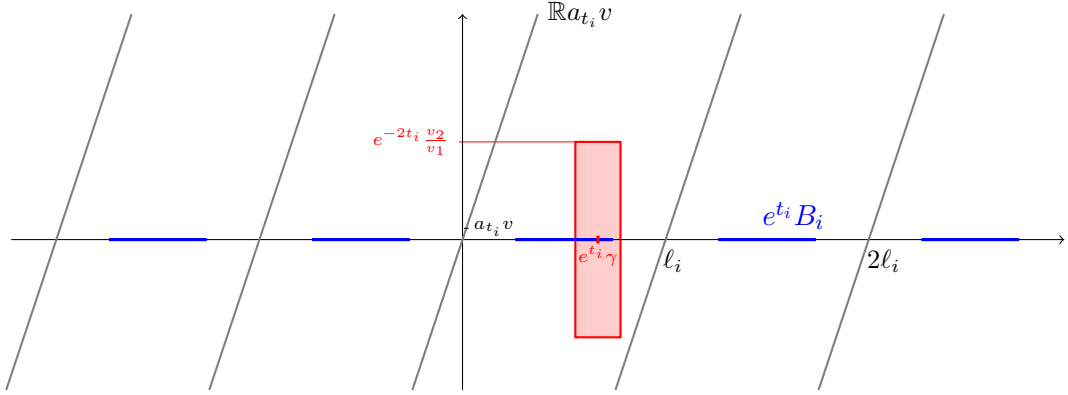
\begin{figure}
\begin{tikzpicture}
\filldraw[red!20!white] (1.5,-1.3) rectangle (2.1,1.3);
\draw[red,thick] (1.5,-1.3) rectangle (2.1,1.3);
\draw (1.8,.05) node[anchor=north,red] {\tiny{$e^{t_i}\gamma$}};
\draw[red,very thin] (-.05,1.3) node[anchor=east] {\tiny{$e^{-2t_i}\frac{v_2}{v_1}$}} -- (1.5,1.3);
\draw[->] (-6,0) -- (8,0);
\draw[->] (0,-2) -- (0,3);
\foreach \x in {-2,-1,...,2}
{
\draw[thick, gray] (-2/3+2.7*\x,-2) -- (1+2.7*\x,3);
\draw[very thick, blue] (2.7*\x+0.7,0) -- (2.7*\x+2,0);
}
\draw (1,3) node[anchor=west] {\small{$\R a_{t_i}v$}};
\draw (2.8,0) node[anchor=north] {\small{$\ell_i$}};
\draw (5.6,0) node[anchor=north] {\small{$2\ell_i$}};
\draw (0.02,0.15) -- (0.08,0.15) node[anchor=west] {\!\tiny{$a_{t_i}v$}};
\draw (4.4,0) node[anchor=south,blue] {$e^{t_i}B_i$};
\draw[red,very thick] (1.8,-.05) -- (1.8,.05);
\end{tikzpicture}
\caption{Translates of $\R a_{t_i}v$, $e^{t_i}B_i$, and the box around $e^{t_i}\gamma$}
\label{box}
\end{figure}

Therefore, if $t\in [t_i,s_{i+1}]$, then the box around the origin
\begin{align*}
[-e^{t-t_i},e^{t-t_i}]\times [-e^{-t_i-t}\frac{v_2}{v_1},e^{-t_i-t}\frac{v_2}{v_1}]
& \supset [-1,1]\times [-e^{-t_i-s_{i+1}}\frac{v_2}{v_1},e^{-t_i-s_{i+1}}\frac{v_2}{v_1}]\\
& = [-1,1]\times [-1,1]
\end{align*}
does not intersect the grid $a_t(x-\left(\begin{matrix}\gamma\\0\end{matrix}\right))$, for any $\gamma\in B_i$.
This proves our claim.

To prove Proposition~\ref{example}, we use the following elementary Hausdorff dimension estimate.

\begin{lem}\label{hdest}
With the above notation, suppose $\lim_{i\to\infty}\frac{t_i}{i}=\infty$.
Then
\[ \dim_H\bigcap_{i\geq 1} B_i = 1.\]
\end{lem}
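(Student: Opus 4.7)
The plan is to construct a Cantor-like subset $C \subseteq \bigcap_{i\ge 1}B_i \cap [0,1]$ of Hausdorff dimension $1$. Since $\bigcap_i B_i \subseteq \R$, the upper bound $\dim_H \leq 1$ is automatic. The explicit structure of each $B_i$ --- a disjoint union of open intervals of length $L_i := (\ell_i-4)e^{-t_i}$ separated by gaps of length $4e^{-t_i}$, both comparable to $e^{-t_i}$ since $\ell_i \in [5,50]$ --- makes the construction natural.

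First I would pass to a subsequence $i_1 < i_2 < \cdots$ chosen greedily so that $t_{i_k} - t_{i_{k-1}} \geq \log M$ for some large absolute constant $M$; the existence of such a subsequence requires only $t_i \to \infty$, and the hypothesis $t_i/i \to \infty$ gives $i_k = o(t_{i_k})$. Along it, build nested compact sets $C_0 \supseteq C_1 \supseteq \cdots$ where $C_k$ is a disjoint union of $N_k$ closed intervals of length $L_{i_k}$, each a component of $B_{i_k}$ and contained in $\bigcap_{j\le i_k} B_j$. The combinatorial heart is the estimate
$$ N_k \geq c\, e^{t_{i_k}-t_{i_{k-1}}}\, N_{k-1},$$
for some absolute $c>0$: each parent component of $C_{k-1}$, of length $\asymp e^{-t_{i_{k-1}}}$, contains $\Omega(e^{t_{i_k}-t_{i_{k-1}}})$ candidate $B_{i_k}$-subintervals of length $L_{i_k}$, from which one subtracts those spoiled by the intermediate bad sets $B_j^c$, $i_{k-1}<j<i_k$. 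Since each such $B_j^c$ has scale within a factor of $M$ of $e^{-t_{i_{k-1}}}$, its cumulative effect is absorbed into $c$ once $M$ is taken large. Iterating yields $\log N_k \geq t_{i_k} - O(k)$.

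The standard mass-distribution estimate (place the uniform probability measure on $C_k$ and pass to a weak-$*$ limit, or apply Frostman's lemma using that level-$k$ intervals are well-separated at scale $\asymp L_{i_k}$) then gives
$$ \dim_H C \geq \liminf_{k\to\infty}\frac{\log N_k}{-\log L_{i_k}} \geq \liminf_{k\to\infty}\frac{t_{i_k}-O(k)}{t_{i_k}+O(1)} = 1,$$
using $k \leq i_k = o(t_{i_k})$. The hard part will be the bookkeeping for the intermediate $B_j^c$'s: because the hypothesis $t_i/i\to\infty$ does not preclude $t_i-t_{i-1}$ being small for infinitely many $i$, the blocks between consecutive subsequence indices can be long, and one must verify that at each stage $k$ the geometric gain $e^{t_{i_k}-t_{i_{k-1}}}$ dominates the cumulative loss from all intermediate indices in $(i_{k-1},i_k)$. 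This is precisely what forces the greedy choice of the subsequence with $M$ taken sufficiently large relative to the density bound $4/\ell_j\le 4/5$ of the bad sets.
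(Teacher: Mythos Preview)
Your core strategy---build a Cantor subset of $\bigcap_i B_i$ and apply the mass distribution principle---is exactly what the paper does. The difference is that the paper does \emph{not} pass to a subsequence: it works directly with all indices $i$, defining $\mu_i$ by redistributing mass at each level onto the $B_i$-intervals inside each $B_{i-1}$-interval. The branching factor at step $i$ is $N_i\asymp e^{t_i-t_{i-1}}$, so the mass of a level-$i$ interval is at most $C^i e^{-t_i}$. Since a ball $B(x,r)$ with $r\asymp e^{-t_{i-1}}$ meets at most $\asymp re^{t_i}$ such intervals, one gets $\mu(B(x,r))\leq rC^i$; the hypothesis $t_i/i\to\infty$ then gives $C^i\leq r^{-\eps}$. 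The point is that the accumulated multiplicative error is exactly $C^i$, and this is handled globally by $i=o(t_i)$, not by controlling individual gaps $t_i-t_{i-1}$.

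Your subsequencing, by contrast, creates a genuine difficulty that you have not resolved. With the greedy choice, the intermediate indices $j\in(i_{k-1},i_k)$ all have $t_j-t_{i_{k-1}}<\log M$, but there is no bound on \emph{how many} such $j$ there are: the hypothesis $t_i/i\to\infty$ allows arbitrarily many $t_j$'s to cluster in an interval of length $\log M$. Each $B_j^c$ has density at least $4/50$ at the scale of the parent interval, so if there are $N$ intermediate indices, the cumulative removal can destroy every candidate child. Taking $M$ large does not help, since the number of intermediate indices is not controlled by $M$. So the step ``its cumulative effect is absorbed into $c$ once $M$ is taken large'' does not go through as stated. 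The paper's direct approach sidesteps this entirely: by processing every index in order, each step contributes only a bounded multiplicative loss, and there are only $i$ steps up to level $i$.
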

\begin{proof}
By the mass distribution principle \cite[\S4.2 page 60]{falconer}, it suffices to construct on $B=\bigcap B_i$, for $\eps>0$ arbitrarily small, a probability measure $\mu$ satisfying, for all $x$ and all $r>0$ sufficiently small,
\[ \mu(B(x,r)) \leq r^{1-\eps}.\]
For this, we define $\mu_1$ to be the Lebesgue measure on each interval of $B_1$ included in $[0,1]$, normalized to be a probability measure.
Let $N_1$ be the number of intervals of $B_1$ included in $[0,1]$.
Within bounded multiplicative constants, we have
\[ N_1 \asymp \frac{e^{t_1}}{\ell_1} \asymp e^{t_1}.\]
Then, we let $\mu_2$ be the probability measure compatible with $\mu_1$ (in the sense that the $\mu_2$-mass of a $B_1$-interval is equal to its $\mu_1$-mass) and equal to the appropriately normalized Lebesgue measure on each $B_2$-interval.
The number of $B_2$-intervals inside a $B_1$ interval is
\[ N_2 \asymp \frac{e^{-t_1}\ell_1}{e^{-t_2}\ell_2} \asymp e^{t_2-t_1}.\]
Iterating this procedure, we obtain a sequence of probability measures $\mu_n$ supported on $\bigcap_{i=1}^n B_i$ ; then, we let $\mu$ be a weak-* limit of the sequence $(\mu_n)$.
Note that, by our construction, if $I$ is a $B_i$-interval then for all $n\ge i$, $\mu(I) = \mu_n(I)$.

For $r>0$ sufficiently small, find $i$ such that $e^{-t_{i-1}}\ell_{i-1}>r\geq e^{-t_i}\ell_i$. 
Since $B_i$-intervals are separated by a distance $e^{-t_i}\ell_i$, the number of $B_i$-intervals intersecting $B(x,r)$ is at most $\frac{r}{e^{-t_i}\ell_i}\asymp re^{t_i}$, and the $\mu$-mass of a $B_i$-interval is $\asymp (N_1\dots N_i)^{-1}\leq C^i e^{-t_i}$,  where $C$ is some positive constant independent of $i$, so that
\[ \mu(B(x,r)) \leq re^{t_i}C^ie^{-t_i} \leq rC^i.\]
Using that $r\ll e^{-t_{i-1}}$ and that $\lim \frac{i}{t_{i-1}}=0$, we find that, given any $\eps>0$, for $i$ large enough (i.e. $r$ small enough), $C^i=e^{t_{i-1}\frac{i\log C}{t_{i-1}}}\leq r^{-\eps}$.
Thus, for sufficiently small $r>0$ (depending on $\eps$)
\[ \mu(B(x,r)) \leq r^{1-\eps}.\]
\end{proof}

We can now derive Proposition~\ref{example}.

\begin{proof}[Proof of Proposition~\ref{example}]
Let $\alpha\in [0,1]$ be an irrational number with continued fraction expansion $\alpha=[n_1,n_2,\dots]$ such that $\lim n_i=\infty$, and set
\[ x= \begin{pmatrix} 1 & \alpha\\ 0 & 1\end{pmatrix}\Z^2.\]
The set $E=\{t\geq 0 \ |\ \lambda_1(a_tx)\geq\frac{1}{10}\}$ can be written as a union of disjoint intervals
\[ E = [s_1,t_1]\cup [s_2,t_2] \cup \dots \]
and for some constant $C$, for all $i$, $t_i-s_i\leq C$, and $\lim \frac{t_i}{i}=\infty$.
By Lemma~\ref{hdest}, the set $B=\bigcap_{i\geq 1} B_i$ has Hausdorff dimension 1.
Moreover, by (\ref{cla}), for any $\gamma\in B$, for all $t$ not in any interval $[s_i,t_i]$,
\[ \sigma(a_t(x+\,\left(\begin{matrix}\gamma\\0\end{matrix}\right))) \geq 1.\]
Since the intervals $[s_i,t_i]$ are disjoint and have length at most $C$, we find that for all $\gamma\in B$ and all $t\geq 0$,
\[ \sigma(a_t(x+\left(\begin{matrix}\gamma\\0\end{matrix}\right))) \geq e^{-C}.\]
This shows that for $\cO$ being the $\frac{1}{2}e^{-C}$-ball around the origin with respect to the sup-norm, we have 
that the image of the set $B\times [-1,1]$ in the torus $\bR^2/x$ is contained in $\Ybe(x)$ (note that translating in the stable direction does not affect the asymptotic properties of the $a_t$-orbit), which implies in particular that
\[ \dim_H \Ybe(x) = 2.\]
\end{proof}
\noindent
\tb{Acknowledgments}. We would like to thank Manfred Einsiedler and Elon Lindenstrauss for valuable discussions. 
Seonhee Lim acknowledges the support of the Samsung Science and Technology
Foundation under Project No. SSTF-BA1601-03. 
Nicolas 
de Saxc\'e acknowledges the warm hospitality of the mathematics department at the Technion. 
Uri Shapira acknowledges the support of ISF grant 357/13.

\def\cprime{$'$} \def\cprime{$'$} \def\cprime{$'$}
\providecommand{\bysame}{\leavevmode\hbox to3em{\hrulefill}\thinspace}
\providecommand{\MR}{\relax\ifhmode\unskip\space\fi MR }
\providecommand{\MRhref}[2]{%
  \href{http://www.ams.org/mathscinet-getitem?mr=#1}{#2}
}
\providecommand{\href}[2]{#2}

\end{document}